\newtheorem{rem}{Remark}[section]
\newtheorem{prop}{Proposition}[section]
\newtheorem{defi}{Definition}[section]
\newtheorem{lem}{Lemma}[section]
\newtheorem{thm}{Theorem}[section]
\def\ds{\displaystyle}
\newcommand{\dd}{\mathrm{d}}
\newcommand{\eps}{\varepsilon}
\newcommand{\R}{\mathbb{R}}
\newcommand\be{\begin{equation}}
\newcommand\ee{\end{equation}}
\numberwithin{equation}{section}
\title{Second-order analysis for the time crisis problem\footnote{This research benefited from the support of the LABEX NUMEV Montpellier and from the ESI (University of Vienna). }}
\author{Terence Bayen\footnote{IMAG, Univ Montpellier, CNRS, Montpellier, France  {\tt\small
    terence.bayen@umontpellier.fr}} ,
   Laurent Pfeiffer\footnote{Institute of Mathematics, University of Graz, Austria {\tt\small laurent.pfeiffer@uni-graz.at}
}}
\date{\today}
\begin{document}

\maketitle

\begin{abstract}
In this article, we prove second-order necessary optimality conditions for the so-called time crisis problem that comes up within the context of viability theory. 
It consists in minimizing the time spent by solutions of a controlled dynamics 
outside a given subset $K$ of the state space. One essential feature is the discontinuity of the characteristic function 
involved in the cost functional. Thanks to a change of time and an augmentation of the dynamics, 
we relate the time crisis problem to an auxiliary Mayer control problem. 
This allows us to use the classical tools of optimal control for obtaining optimality conditions. 
Going back to the original problem, we deduce that way second order optimality conditions for the time crisis problem. 
\end{abstract}

\medskip

{\bf{Keywords}}. Optimal control, Pontryagin maximum principle, Second order optimality conditions.
 
\section{Introduction}

Given a controlled dynamics $f: \R^n \times \R^m \rightarrow \R^n$ with associated system
\be{\label{sys1-intro}}
\dot{x}=f(x,u),
\ee
and given a non-empty closed subset $K\subset \R^n$,
the {\it{time crisis problem}} 
amounts to minimize the time spent by solutions of \eqref{sys1-intro} outside the set $K$ w.r.t.~admissible controls $u$:
\be{\label{TC1-intro}}
\inf_{u(\cdot)} \int_0^{T} \mathds{1}_{K^c}(x_u(t,x_0)) \ \dd t.  \tag{TC}
\ee
Here $T\in \R_+^*\cup \{+\infty\}$ and $\mathds{1}_{K^c}$ denotes the 
characteristic function of the complementary of $K$ in $\R^n$:
$$
\mathds{1}_{K^c}(x):=
\begin{cases}
\begin{array}{lll}
0 & \mathrm{if}&  x\in K,\\
1 & \mathrm{if}& x\notin K. 
\end{array}
\end{cases}
$$
In addition, $x_u(\cdot,x_0)$ denotes a solution of \eqref{sys1-intro} such that $x_u(0,x_0)=x_0$ with $x_0\in \R^n$. 
Originally, the time crisis problem was introduced in \cite{DSP} in the context of viability theory \cite{aubin1,ABSP} with $T=+\infty$.  
The value of the infimum in \eqref{TC1-intro} (possibly equal to $+\infty$) is 
the so-called {\it{minimal time crisis function}} and it can be written $\theta(x_0)$ as an explicit function of the initial condition $x_0$.  
When $x_0$ belongs to the domain of $\theta$, this function measures the minimal time spent by solutions of the system outside the set $K$,  
which models state constraints. Finding an optimal control in \eqref{TC1-intro} allows to obtain  
significant informations on the system (in terms of violation of state constraints) in several application models (see, {\it{e.g.}}, \cite{bayen3}). 

With regard to the properties satisfied by the minimal time crisis function, there are
two essential features: first, the integrand is discontinuous at every time $t$ at which $x_u(\cdot,x_0)$ crosses the boundary of $K$. 
Second, the functional may involve an infinite horizon, which also requires a careful attention. 
So, one cannot directly apply the classical necessary optimality conditions \cite{Pontry,vinter} to find optimal controls. 
In \cite{DSP}, sufficient optimality conditions have been derived based on the characterization of the value function as a generalized solution of an Hamilton-Jacobi equation. In \cite{bayen2,bayen1}, first-order optimality conditions were given 
thanks to the hybrid maximum principle that is an extension of Pontryagin's Principle \cite{clarke2013} (see also \cite{piccoli,trelat}).   
Note that the obtention of such conditions relies on a transversality assumption on optimal trajectories in order to properly define extremals of the problem (see also \cite{trelat}). This assumption means that at each crossing time of the set $K$, a trajectory does not hit the boundary tangentially.   

Our aim in this paper is to go one one step further and to provide second-order optimality conditions 
for the time crisis problem when $T<+\infty$ and with an additional terminal-payoff (see Problem \eqref{TC1} hereafter).    
Doing so, we  introduce a time re-parametrization and an augmented controlled system 
(based on an explicit description of $K$ and of the admissible control set) 
that allow us to transform \eqref{TC1} into a classical Mayer  
control problem ($\mathrm{P}$) with mixed initial-terminal constraints, for which we apply the usual tools of optimal control (first- and second-order optimality conditions). 
The above transformation is in the spirit of \cite{dmitruk1,dmitruk2,dmitruk3} (it is used in  \cite{dmitruk1} to relate hybrid control problems 
to classical control problems for which one can apply first-order optimality conditions).   
It is made possible assuming that a nominal optimal trajectory possesses a finite number of transverse crossing times. 
This assumption implies that small perturbations of the nominal trajectory necessarily 
have the same number of crossing times (as the nominal one), which is a key property for the transformation of the problem. 
We also impose an inward pointing condition on the control constraint.  

The paper is organized as follows: Section \ref{formul_sec} introduces the main assumptions and Section \ref{transfo-sec} the transformation of the time crisis problem into a Mayer control problem ($\mathrm{P}$).
In Section \ref{NOC-sec}, we prove first- and second-order necessary optimality conditions for the transformed ($\mathrm{P}$), which are then translated into optimality conditions for the time crisis problem. In a first step, these results are formulated with generalized Lagrange multipliers. In a second step, we show that they are still valid when restricted to Pontryagin multipliers. We also prove that Pontryagin multipliers are non-singular and unique, up to a multiplicative constant.
The results obtained in Section \ref{transfo-sec} and Section \ref{NOC-sec} are given for the situation of a single crossing time. They can be naturally extended to the situation with several crossing times, as explained then in Section \ref{NOC_sec_gen}.

\section{Formulation of the problem and assumptions} \label{formul_sec}

Throughout the rest of the paper $T>0$ is fixed, $n$, $m$, and $l$ are positive integers and $|\cdot|$ stands for the euclidean norm in 
$\R^s$ associated with the standard inner product written $a\cdot b$ for $a,b\in \R^s$ ($s$ being a positive integer).
Given a non-empty closed subset $K$ of $\R^n$, we denote by $\mathrm{Int}(K)$, $\partial K$, and $K^c$ the interior, the boundary, and the complementary of the set $K$. 
In the sequel, we consider an autonomous controlled dynamics $f:\R^n \times \R^m \rightarrow \R^n$ whose associated system is
\be{\label{sys}}
\dot{x}=f(x,u),
\ee
where $x$ is the state and $u$ is a measurable control with values in a non-empty closed subset $U$ of $\R^m$. 
We suppose that the dynamics fulfills the following (standard) assumptions: 
\begin{itemize}
\item The mapping $f$ is of class $C^2$ w.r.t.~$(x,u)$, and satisfies the linear growth condition: there exist $c_1>0$ and $c_2>0$ such that for all $x\in \R^n$ and all $u\in U$, one has:
\be{\label{croissance}}
|f(x,u)|\leq c_1 |x|+c_2. 
\ee
\item For any $x\in \R^n$, the velocity set $F(x):=\{f(x,u)\; ; \; u\in U\}$ is a non-empty compact convex subset of $\R^n$.
\end{itemize}
Under these assumptions, for any $x_0\in \R^n$, there is a unique solution $x_u(\cdot)$ of the Cauchy problem
\be{\label{cauchy-pbm1}}
\left\{
\begin{array}{cl}
\dot{x}&=f(x,u),\\
x(0)&=x_0,
\end{array}
\right.
\ee
defined over $[0,T]$. Let now $K$ be a closed subset of $\R^n$ with non-empty interior, 
and let $\phi:\R^n\rightarrow \R$ be a terminal pay-off (of class $C^2$). Our aim in this paper is to investigate necessary optimality conditions 
for the optimal control problem:
\be{\label{TC1}}
\inf_{u \in \mathcal{U}} J_T(u):=\phi(x_u(T))+ \int_0^{T} \mathds{1}_{K^c}(x_u(t)) \ \dd t,
\ee
where the set of admissible controls $\mathcal{U}$ is given by
$$
\mathcal{U}:=\{u:[0,T] \rightarrow U \; ; \; u \; \mathrm{meas}. \}. 
$$
By an optimal solution of \eqref{TC1}, we mean a (global) optimal control $u \in \mathcal{U}$ of \eqref{TC1}. 
Existence of an optimal solution for \eqref{TC1} is standard (we refer to \cite{bayen1,DSP}). 
Note that when $\phi \equiv 0$, we retrieve the so-called {\it{time crisis problem}} over $[0,T]$ as in \cite{bayen1}.

To express optimality conditions, it is convenient to  write $U$ and $K$ as sub-level sets of given functions satisfying qualification conditions. We therefore need to introduce additional assumptions. We fix for the rest of the article a solution $\bar{u} \in \mathcal{U}$ to \eqref{TC1}, with associated trajectory $\bar{x}:= x_{\bar{u}}$, satisfying Assumption (H1).

\begin{enumerate}[(H1)]
\item There is a function $c:\R^m \rightarrow \R^l$ of class $C^2$ such that
\be{\label{control-set}}
U= \{u \in \R^m \; ; \; c_i(u) \leq 0, \; 1 \leq i \leq l\}.
\ee
For $\delta > 0$ and $i \in \{ 1,..., l \}$, we define $\Delta_{c,i}^{\delta}:= \{ t \in (0,T) \,;\, c_i(\bar{u}(t)) \leq - \delta \}$ and
for $\delta > 0$ and $t \in (0,T)$, we define
\begin{equation*}
I_c^\delta(t):= \{ i \in \{ 1,...,l \} \,;\, t \in \Delta_{c,i}^\delta \}.
\end{equation*}
Given a subset $J= \{ i_1,..., i_{|J|} \} \subseteq \{ 1,...,l \}$ of cardinality $|J|$, we set $c_{J}(u):= (c_{i_1}(u),...,c_{i_{|J|}}(u)) \in \R^{|J|}$. We assume that there exist $\varepsilon > 0$ and $\delta > 0$ such that
\begin{equation} \label{LIG}
\varepsilon | \xi | \leq \nabla c_{I_c^\delta(t)}(\bar{u}(t)) \xi, \quad \forall \xi \in \R^{|I_c^\delta(t)|}, \text{ for a.e.\@ $t \in (0,T)$}.
\end{equation}
\end{enumerate}
Throughout the article, we also assume that $K$ satisfies the following hypothesis. 
\begin{enumerate}[(H2)]
\item There is a function $g: \R^n \rightarrow \R$ of class $C^1$ such that 
\be
K=\{ x \in \R^n \; ; \; g(x) \leq 0\}. 
\ee
\end{enumerate}

\begin{rem}
Inequality \eqref{LIG}, referred to as \emph{linear independence of gradients of active constraints} condition is classical. The reader can easily check that it implies the following properties (see, {\it{e.g.}}, \rm{\cite{BH09}}):
\begin{itemize}
\item Inward pointing condition: there exist $\varepsilon > 0$ and $v \in L^\infty(0,T;\R^m)$ such that 
\be{\label{IPC}}
c(\bar{u}(t)) + Dc(\bar{u}(t))v(t) \leq -\varepsilon \quad \mathrm{for\ a.e.} \; t \in (0,T). 
\ee
\item There exists $\delta>0$ such that the following mapping is onto:
\begin{equation} \label{qualif2}
v \in L^2(0,T;\R^m) \mapsto \Big( (Dc_i(\bar{u}(\cdot)) v(\cdot))_{|\Delta_{c,i}^\delta} \Big)_{i=1,...,l} \in \prod_{i=1}^l L^2(\Delta_{c,i}^\delta).
\end{equation}
Note that the inward pointing condition ensures the existence of a Lagrange multiplier in $L^\infty(0,T;\R^l)$ for 
\eqref{TC1} under the control constraint $c(u)\leq 0$ (see also {\rm \cite{BDP14,BH09,BO10}}).
\end{itemize}
\end{rem}

\begin{rem}
For the results dealing with first-order optimality conditions in Subsection \ref{NOC1-sec}, it is enough to assume that $f$, $\phi$, $c$, and $g$ are of class $C^1$, and the condition \eqref{LIG} can be replaced by the inward pointing condition, which is weaker.
\end{rem}

The analysis of optimal controls of \eqref{TC1} and associated trajectories relies on the notion of {\it{crossing time}} that we now recall. 
\begin{defi}
{\rm{(i)}} A crossing time from $K$ to $K^c$  
is a time $t_c \in (0,T)$ for which there is $\eps>0$ such that for any time 
$t\in (t_c-\eps,t_c]$ (resp. $t\in (t_c,t_c+\eps)$) one has $\bar{x}(t)\in K$ (resp. $\bar{x}(t) \in K^c$). 
\\
{\rm{(ii)}} A crossing time $t_c$ from $K$ to $K^c$ is  {\it{transverse}} if the control $\bar{u}$ is right- and left- continuous at time $t_c$, and if 
\be{\label{cond-transverse}}
\dot{\bar{x}}(t_c^\pm) \cdot \nabla g(\bar{x}(t_c))\not=0. 
\ee
\end{defi}

When \eqref{cond-transverse} is fulfilled, the trajectory $\bar{x}$ does not hit the boundary of $K$ tangentially at time $t_c$. Note that there are similar definitions for crossing times from $K^c$ to $K$ and transverse crossing times from $K^c$ to $K$. 
The analysis that we carry out in this paper relies on the following assumption on $\bar{x}$:
\begin{enumerate}[(H3)]
\item  The optimal trajectory $\bar{x}$ possesses 
exactly $r \in \mathbb{N}^*$ transverse crossing times $\bar{\tau}_1 < \cdots < \bar{\tau}_r$ in $(0,T)$ such that $\bar{\tau}_{2i+1}$ (resp.\@ $\bar{\tau}_{2i}$) is a crossing time from $K$ to $K^c$ (resp.\@ from $K^c$ to $K$). For all $t \in [0,T] \backslash \{ \bar{\tau}_1,\cdots,\bar{\tau}_r \}$, $g(\bar{x}(t)) \neq 0$.
\end{enumerate}
Assumption (H3) implicitly supposes that the initial condition satisfies $x_0 \in \mathrm{Int}(K)$, but we could consider as well $x_0$ in $K^c$ 
with slight modifications. It also excludes the chattering phenomenon (see \cite{zelikin}) at the boundary of the set $K$, that is, we do not consider in this study optimal trajectories that could eventually switch an infinite number of times at the boundary of $K$ over a finite horizon (see also \cite{trelat2}).  

\section{Reformulation of the time crisis problem}{\label{transfo-sec}}

We recall that $\bar{u}$ is a fixed solution to Problem (\ref{TC1}) with associated trajectory $\bar{x} = x_{\bar{u}}$, satisfying Assumption (H1). We moreover assume that (H3) is satisfied with $r=1$; the unique crossing time is denoted by $\bar{\tau}$.
The goal of this section is to provide a formulation of \eqref{TC1} as a classical optimal control problem.
This will go in two steps: 
first, a change of time in \eqref{sys} is introduced (Section \ref{time-transfo-sec}). Second, 
we consider an augmented system associated with the dynamics obtained after the first transformation (Section \ref{augment-transfo-sec}). 

\subsection{Time transformation}{\label{time-transfo-sec}}

We start by introducing a time transformation as follows. For $\tau \in (0,T)$, 
let $\pi_\tau:[0,2]\rightarrow [0,T]$, $s\mapsto t:=\pi_\tau(s)$ be the piecewise-affine function defined as
\be{\label{time-change1}}
\pi_\tau(s):=
\left\{
\begin{array}{lll}
\tau s, & \mathrm{if}&s\in [0,1],\\
(T-\tau)s+2\tau-T, & \mathrm{if}&s\in [1,2].
\end{array}
\right.
\ee
It is easily seen that the change of variable $\pi_\tau$ is one-to-one if and only if $\tau\in (0,T)$. 
Now, given $u \in \mathcal{U}$, we set for $s \in [0,2]$
\be{\label{change1}}
\left|
\begin{array}{cl}
\tilde u(s)&:=u(\pi_\tau(s)), \\
\tilde x(s)&:=x(\pi_\tau(s)),
\end{array}
\right.
\ee
where $x$ denotes the unique solution of \eqref{cauchy-pbm1} associated with $u$. The trajectory $\tilde{x}$ is then the unique solution to the differential equation
\be{\label{cauchy-pbm3}}
\left\{
\begin{array}{rl}
\ds \frac{d\tilde x}{ds}(s)&=\ds \frac{d\pi_{\tau}}{ds}(s) f(\tilde x(s),\tilde u(s)) \quad \mathrm{for\ a.e.} \; s\in [0,2],\vspace{0.15cm}\\
\tilde x(0)&=x_0. 
\end{array}
\right.
\ee
We can consider now the following set of admissible controls
$$
\mathcal{\tilde U}:=\{\tilde u :[0,2] \rightarrow U \; ; \; \tilde u \; \mathrm{meas}.\}, 
$$
and the following optimal control problem:
\be{\label{TC3}}
\inf_{ \tilde u \in  \mathcal{\tilde U}, \; \tau\in (0,T)} \phi(\tilde x_{\tilde u,\tau}(2))+T-\tau \quad \mathrm{s.t.} \; g(\tilde x_{\tilde u,\tau}(1))=0, 
\ee
where $\tilde x_{\tilde u,\tau}$ is the unique solution of \eqref{cauchy-pbm3}.
Let us emphasize the fact that $\tau$ is an optimization variable of the problem, involved in the dynamics of the system. The crossing time of the trajectory is fixed to 1.
We adopt the following definition of minimum. 

\begin{defi}{\label{min-faible1}}
A pair $(\tilde u,\tau)\in \mathcal{\tilde U} \times (0,T)$ is a weak minimum of \eqref{TC3} if there exists $\eps>0$ such that for all control 
$\tilde u' \in  \mathcal{\tilde U}$ and all $\tau'\in (0,T)$ one has:
\be{\label{def-weak-min}}
\|\tilde u'-\tilde u\|_{L^\infty(0,2;\R^m)} \leq \eps \quad \mathrm{and} \quad |\tau-\tau'| \leq \eps \quad \Rightarrow \quad \phi(\tilde x_{\tilde u,\tau}(2))+T-\tau \leq 
\phi(\tilde x_{\tilde u',\tau'}(2))+T-\tau',
\ee
where $\tilde x_{\tilde u',\tau'}$ is the unique solution of \eqref{cauchy-pbm3} associated with $\tilde u'$ and $\tau'$.
\end{defi}

The next proposition is a key result to reformulate \eqref{TC1} as a classical optimal control problem.
 
\begin{prop}{\label{minGminF}} 
Let $\tilde{u}:= \bar{u} \circ \pi_{\bar{\tau}}$. Then, $(\tilde u, \bar{\tau})$ is a weak minimum of \eqref{TC3}. 
\end{prop}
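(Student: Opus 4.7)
The plan is to use the inverse of the time change \eqref{time-change1} to associate with any pair $(\tilde u',\tau') \in \mathcal{\tilde U} \times (0,T)$ feasible for \eqref{TC3} (that is, satisfying $g(\tilde x_{\tilde u',\tau'}(1))=0$) and sufficiently close to $(\tilde u,\bar\tau)$ a control $u' \in \mathcal U$ of the original problem \eqref{TC1}, to check that the two cost functionals then coincide, and finally to invoke the global optimality of $\bar u$ for \eqref{TC1}. Concretely, I set $u':= \tilde u' \circ \pi_{\tau'}^{-1} \in \mathcal U$; by uniqueness of the Cauchy problem \eqref{cauchy-pbm1} and the chain rule, $x_{u'}(t) = \tilde x_{\tilde u',\tau'}(\pi_{\tau'}^{-1}(t))$ for every $t\in [0,T]$. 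Standard stability of \eqref{cauchy-pbm3} in $L^\infty$, combined with uniform convergence of $\pi_{\tau'}^{-1}$ to $\pi_{\bar\tau}^{-1}$ on $[0,T]$ as $\tau'\to\bar\tau$, yields $\|x_{u'}-\bar x\|_{C^0([0,T];\R^n)} \to 0$ as $(\tilde u',\tau') \to (\tilde u,\bar\tau)$ in $L^\infty(0,2;\R^m) \times \R$.

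The core of the proof is to show that for such perturbations, the trajectory $x_{u'}$ inherits the crossing structure of $\bar x$: it crosses $\partial K$ from $\mathrm{Int}(K)$ to $K^c$ exactly once, precisely at $t=\tau'$. Away from $\bar\tau$ this is immediate from (H3), which provides $\rho,\eta>0$ with $|g(\bar x(t))| \geq \eta$ on $[0,T]\setminus(\bar\tau-\rho,\bar\tau+\rho)$; uniform closeness of $x_{u'}$ to $\bar x$ rules out any zero of $g\circ x_{u'}$ there and preserves its sign. Inside $(\bar\tau-\rho,\bar\tau+\rho)$, feasibility gives $g(x_{u'}(\tau')) = g(\tilde x_{\tilde u',\tau'}(1)) = 0$; and for small $\rho$, the one-sided continuity of $\bar u$ at $\bar\tau$ forces $\tilde u(s)$ to be close to $\bar u(\bar\tau^\pm)$ for $s$ close to $1$ from each side, hence so is $\tilde u'(s)$ when $\|\tilde u'-\tilde u\|_\infty$ is small. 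Combined with the uniform convergence $x_{u'} \to \bar x$, this shows that $\frac{d}{dt}[g\circ x_{u'}](t) = \nabla g(x_{u'}(t)) \cdot f(x_{u'}(t),u'(t))$ stays close, on each side of $\tau'$, to $\nabla g(\bar x(\bar\tau)) \cdot \dot{\bar x}(\bar\tau^\pm)$, which is nonzero and positive (crossing from $K$ to $K^c$) by the transversality condition \eqref{cond-transverse}. Consequently $g \circ x_{u'}$ is strictly increasing on $(\bar\tau-\rho,\bar\tau+\rho)$, with its unique zero at $\tau'$.

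Combining both regimes, $x_{u'}(t)\in K$ for $t\in[0,\tau']$ and $x_{u'}(t)\in K^c$ for $t\in(\tau',T]$, so $\int_0^T \indic_{K^c}(x_{u'}(t))\,\dd t = T-\tau'$. Since $x_{u'}(T) = \tilde x_{\tilde u',\tau'}(2)$, this gives $J_T(u') = \phi(\tilde x_{\tilde u',\tau'}(2)) + T - \tau'$; applied at $(\tilde u,\bar\tau)$ the same identity reads $J_T(\bar u) = \phi(\tilde x_{\tilde u,\bar\tau}(2)) + T - \bar\tau$. Global optimality of $\bar u$ for \eqref{TC1} then yields exactly \eqref{def-weak-min}. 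The main obstacle is the persistence-of-single-crossing step: it has to marry three ingredients simultaneously — uniform $C^0$ closeness of trajectories (needed on the regions where $g\circ\bar x$ is bounded away from zero), the one-sided continuity of $\bar u$ at $\bar\tau$ (needed to control $u'(t)$ near $t=\tau'$ despite $\bar u$ being possibly discontinuous there), and the transversality inequality \eqref{cond-transverse} (needed to turn smallness of the derivative perturbation into strict monotonicity of $g\circ x_{u'}$ across $\tau'$). This is where Assumption (H3) is used in full force.
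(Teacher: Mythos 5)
Your proposal is correct and follows essentially the same strategy as the paper's proof: continuity of the control-to-state map to preserve the sign of $g$ away from the crossing, transversality \eqref{cond-transverse} together with the one-sided continuity of $\bar{u}$ at $\bar{\tau}$ to get strict monotonicity of $g$ along the perturbed trajectory near the crossing (anchored at the feasibility constraint $g(\tilde x'(1))=0$), and finally the global optimality of $\bar{u}$ for \eqref{TC1}. The only difference is cosmetic: you carry out the persistence-of-single-crossing argument in the original time variable $t\in[0,T]$, whereas the paper works directly with $\tilde x'$ in the rescaled variable $s\in[0,2]$.
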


\begin{proof}
Let us set $\tilde{x}:= \tilde{x}_{\tilde{u},\bar{\tau}}= \bar{x} \circ \pi_{\bar{\tau}}$.
First, we show that there is $\eps_1>0$ such that for all $(\tilde u',\tau') \in  L^\infty(0,2;\R^m) \times (0,T)$ one has
\be{\label{tmp1-lemme-laurent}}
\left\{
\begin{array}{ll}
\|\tilde u - \tilde u'\|_{L^\infty(0,2;\R^m)} &\leq \eps_1,\\
|\bar{\tau}-\tau'|&\leq \eps_1,\\
g(\tilde x'(1))&=0,
\end{array}
\right.
\quad \Rightarrow \quad 
\forall s \in [0,1), \; g(\tilde x'(s))<0,
\ee
with $\tilde{x}' := \tilde{x}_{\tilde{u}',\tau'}$. 
Since {\rm{(H3)}} is satisfied, we have $\zeta:= \tau \nabla g(\tilde x(1))\cdot f(\tilde x(1),\tilde u(1^-))>0$ where 
$\tilde u(1^-):=\lim_{t \uparrow \bar{\tau}} \bar{u}(t)$. 
By continuity of $f$ and $g$ there is $\eta_1>0$ such that for any $(x,u,\tau)\in \R^n \times U \times (0,T)$ one has:
\begin{equation} \label{3.65}
\left\{
\begin{array}{ll}
|x-\tilde x(1)| & \leq \eta_1,\\
|u - \tilde u(1^-)| &\leq \eta_1,\\
|{\tau}-\bar{\tau}| & \leq \eta_1,
\end{array}
\right.
\quad \Rightarrow \quad 
\tau \nabla g(x)\cdot f(x,u) \geq \frac{\zeta}{2}>0.
\end{equation}
Now, by continuity of the trajectory and the control at the crossing time, there exists $\eta_2>0$ such that 
\be{\label{tmp-ineg2}}
|\tilde x (s)-\tilde x(1)| \leq \frac{\eta_1}{2} \quad \mathrm{and} \quad 
|\tilde u(s)-\tilde u(1^-)| \leq \frac{\eta_1}{2}, \quad
\text{for a.e.\@ $s \in (1-\eta_2,1)$}. 
\ee
Since $\tilde x(\cdot)$ is with values in the interior of $K$ over $[0,1-\eta_2]$, there is $\eta_3>0$ such that 
$$
\forall s \in [0,1-\eta_2], \quad g(\tilde x(s))\leq -\eta_3<0. 
$$
Recall now that the mapping 
\begin{equation} \label{eq:control_to_state_map}
(\tilde u',\tau') \in \mathcal{\tilde U} \times (0,T) \mapsto \tilde{x}_{\tilde u',\tau'} \in  L^\infty(0,2;\R^m)
\end{equation}
is continuous, when $\mathcal{\tilde{U}}$ is equipped with the $L^\infty$-norm. Hence, there exists $\eta_4>0$ such that 
\begin{equation} \label{eq:def_eta_4}
\left\{
\begin{array}{ll}
\|\tilde u - \tilde u' \|_{L^\infty(0,2;\R^m)} &\leq \eta_4,\\
|\bar{\tau}-\tau'|&\leq \eta_4,
\end{array}
\right.
\; \Rightarrow \quad 
\begin{array}{l}
\forall s\in [0,1-\eta_2], \; |g(\tilde x'(s))-g(\tilde x(s)) | < \eta_3, \\
\forall s\in [1-\eta_2,1], \; |\tilde x(s)-\tilde x'(s)|\leq \frac{\eta_1}{2}, 
\end{array}
\end{equation}
implying in particular that $s\mapsto g(\tilde x'(s))$ is negative for $s\in [0,1-\eta_2]$. 
It remains now to prove that $s\mapsto g(\tilde x'(s))$ is also negative for $s \in [1-\eta_2,1)$. 
Reducing $\eta_4$ if necessary, we may assume that $\eta_4 \leq \frac{\eta_1}{2}$. We now have
\begin{align*}
|\tilde x'(s)-\tilde x(1)| \leq |\tilde x'(s)-\tilde x(s)| +
|\tilde x(s)-\tilde x(1)| \leq \frac{\eta_1}{2} + \frac{\eta_1}{2} = \eta_1, 
\end{align*}
for all $s \in [1-\eta_2,1]$. Since $\eta_4 \leq \eta_1/2$, we also have 
$$
|\tilde u'(s)-\tilde u(1^-)|\leq \eta_1, \quad
\text{for a.e.\@ $s \in (1-\eta_2,1)$}.
$$
Combining the two previous inequalities and \eqref{3.65}, we obtain that for a.e. $s\in (1-\eta_2,1)$
$$
\frac{d}{ds}g(\tilde x'(s)) = \tau' \nabla g(\tilde x'(s))\cdot f(\tilde x'(s),\tilde u'(s)) \geq \frac{\zeta}{2}.
$$ 
Because $g(\tilde x'(1))=0$, we can conclude that $g(\tilde x'(s))<0$ for all $s\in [1-\eta_2,1)$. 
At this step, we have thus proved \eqref{tmp1-lemme-laurent} with $\eps_1:=\eta_4$. 

By similar arguments as above, there is $\eps_2>0$ such that for any $(\tilde u',\tau')\in  L^\infty(0,2;\R^m) \times (0,T)$ one has
\be{\label{tmp1-lemme-laurent2}}
\left\{
\begin{array}{ll}
\|\tilde u - \tilde u'\|_{L^\infty(0,2;\R^m)} &\leq \eps_2,\\
|\bar{\tau} - \tau'|&\leq \eps_2,\\
g(\tilde x'(1))&=0,
\end{array}
\right.
\quad \Rightarrow \quad 
\forall  s \in (1,2], \; g(\tilde x'(s))>0.
\ee
To conclude the proof, set $\eps:=\min(\eps_1,\eps_2)$ and take a pair $(\tilde u',\tau')\in  L^\infty(0,2;\R^m) \times (0,T)$ satisfying  
the inequalities $\|\tilde u - \tilde u'\|_{L^\infty(0,2;\R^m)} \leq \eps$ and $|\bar{\tau}-\tau'|\leq \eps$. 
It follows that $x'$ (the unique solution of \eqref{cauchy-pbm1} associated with the control $\tilde u' \circ \pi_{\tau'}^{-1}$) 
has exactly one crossing time over $[0,T]$ at $t=\tau'$. 
Because $\bar{u}$ is an optimal solution, we have $J_T(\bar{u}) \leq J_T(u')$, which can then be written 
(using the changes of variable $\pi_\tau$ and $\pi_{\tau'}$) as
$$
\phi(\tilde x(2))+T-\bar{\tau} = J_T(\bar{u}) \leq J_T(u')= \phi(\tilde x'(2))+T-\tau', 
$$
using that $\tilde x(2) = \bar{x}(T)$, $\tilde x'(2)=x'(T)$. 
This proves that $(\tilde u,\bar{\tau})$ is a weak minimum of \eqref{TC3}. 
\end{proof}

Note that an intermediate constraint is involved in problem \eqref{TC3} and that the data functions of \eqref{TC3} are all smooth.
At this stage, it is possible to derive optimality conditions for \eqref{TC3} (see, {\it{e.g.}}, \cite{dmitruk2}).

\subsection{Augmentation of the dynamics}{\label{augment-transfo-sec}}

The goal now is to formulate \eqref{TC3} over the fixed interval $[0,1]$ to avoid the use of the intermediate condition $g(\tilde x_{\tilde u,\tau}(1))=0$ (which will be replaced by an initial-final time condition), and so that we can use classical results of optimal control theory.
Hereafter, we use the notation 
$$
y:=
\left[
\begin{array}{c}
y^{(1)}\\
y^{(2)}\\
\xi
\end{array}
\right], \quad 
v:=
\left[
\begin{array}{c}
v^{(1)}\\
v^{(2)}\\
\end{array}
\right].
$$
for vectors in $\R^{2n+1}$ and in $\R^{2m}$ respectively. Consider the mappings 
$F:\R^{2n+1} \times \R^{2m} \rightarrow \R^{2n+1}$ (standing for an augmented dynamics) and 
$G:\R^{2n+1} \times \R^{2n+1} \rightarrow \R^{2n+2}$ (standing for a mixed initial-final constraint) defined respectively as
$$
F(y,v):=
\left[
\begin{array}{c}
\xi f(y^{(1)},v^{(1)})\\
(T-\xi) f(y^{(2)},v^{(2)})\\
0
\end{array}
\right] 
\quad \mathrm{and} \quad
G(y_0,y_1):=
\left[
\begin{array}{c}
y^{(1)}_0\\
\xi_0\\
y^{(2)}_0-y^{(1)}_1\\
g(y^{(1)}_1)
\end{array}
\right], 
$$
where $y_0:=(y^{(1)}_0,y^{(2)}_0,\xi_0)$ and $y_1:=(y^{(1)}_1,y^{(2)}_1,\xi_1)$. In this setting, 
the set of admissible controls is 
$$
\mathcal{V}:=\left\{v:=(v^{(1)},v^{(2)}):[0,1] \rightarrow U\times U \; ; \; v \; \mathrm{meas}.\right\}, 
$$
and we also define the set $C:=\{x_0\} \times (0,T) \times \{0_{\R^n} \}\times \{0\} \subset \R^{2n+2}$. 

\begin{rem}
The set $C$ comprises the initial condition at time $0$, the fact that $\tau \in (0,T)$ is free, the 
continuity of the trajectory at time $\tau$, and finally, the fact that the trajectory lies on the boundary of $K$ at time $\tau$
\end{rem}

The controlled dynamics then becomes
\be{\label{cauchy-pbm4}}
\frac{dy}{ds}(s)=F(y(s),v(s)),
\ee
with $v\in \mathcal{V}$ and $s\in [0,1]$. We denote by $\mathcal{T}$ the set of pairs $(y,v)$ satisfying \eqref{cauchy-pbm4}, with $v \in \mathcal{V}$.
Finally, we define a terminal pay-off $\psi:\R^{2n+1} \rightarrow \R$ of class $C^2$ as
$$
\psi(y)=\phi(y^{(2)})+T-\xi. 
$$
The new optimal control problem reads as follows:
\be{\label{TC4}}
\inf_{(y,v) \in \mathcal{T}}  \psi(y(1)) \quad \mathrm{s.t. } \;
G(y(0),y(1)) \in C.
\tag{$\mathrm{P}$}
\ee
Note that we keep the variable $y$ as an optimization variable, since its initial condition is not prescribed anymore and thus $y$ cannot be expressed as a function of the control $v$. Let us now recall the definition of a weak minimum and a Pontryagin minimum for \eqref{TC4}. 
 
\begin{defi}{\label{def2-min-faible}}
A pair $(\bar{y},\bar{v}) \in \mathcal{T}$ is a weak minimum (resp.\@ a Pontryagin minimum) of \eqref{TC4} if $G(\bar{y}(0),\bar{y}(1)) \in C$ and if there exists $\eps > 0$ such that for all $(y,v) \in \mathcal{T}$ satisfying
$G(y(0),y(1)) \in C$, one has:
\begin{equation} \label{eq:def_min_p}
|y(0)-\bar{y}(0)| \leq \eps  \quad \mathrm{and} \quad \| v - \bar{v} \|_{L^r(0,1;\R^{2m})} \leq \eps \; \; \Rightarrow \; \psi(\bar{y}(1)) \leq \psi(y(1)),
\end{equation}
for $r=\infty$ (resp.\@ $r= 1$).
\end{defi}

Problem \eqref{TC4} is a problem with a classical structure. The last step of the ``transformation" of the time crisis problem is done in the following proposition, where we construct a local solution to \eqref{TC4}.

\begin{prop}{\label{equivTC1-TC4}}
The pair $(\bar{y},\bar{v}) \in \mathcal{V}$, defined as follows, is a weak minimum of \eqref{TC4}:
\be{\label{def-control-v}}
\bar{y}(s)=\left|
\begin{array}{l}
\bar{y}^{(1)}(s):=\tilde x(s),\\
\bar{y}^{(2)}(s):=\tilde x(s+1), \\
\bar{\xi}(s):= \bar{\tau},
\end{array}
\right.
\quad
\bar{v}(s)=\left|
\begin{array}{l}
\bar{v}^{(1)}(s):=\tilde u(s),\\
\bar{v}^{(2)}(s):=\tilde u(s+1),
\end{array}
\right.
\quad s \in [0,1],
\ee
where $\bar{\tau}$ is the unique crossing time of $\bar{x}$ and where $\tilde u= \bar{u} \circ \pi_{\bar{\tau}}$ and $\tilde x= \bar{x} \circ \pi_{\bar{\tau}}$.
\end{prop}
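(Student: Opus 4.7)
The plan is to verify the admissibility of $(\bar y,\bar v)$ for \eqref{TC4} and then reduce any nearby competitor back to a competitor of \eqref{TC3}, invoking Proposition \ref{minGminF}.

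First, I would check admissibility by direct computation. For $s\in[0,1]$, differentiating $\bar y^{(1)}(s)=\tilde x(s)=\bar x(\bar\tau s)$ gives $\bar\tau f(\tilde x(s),\tilde u(s))=\bar\xi(s)f(\bar y^{(1)}(s),\bar v^{(1)}(s))$, and similarly $\bar y^{(2)}(s)=\tilde x(s+1)=\bar x((T-\bar\tau)s+\bar\tau)$ yields $(T-\bar\xi(s))f(\bar y^{(2)}(s),\bar v^{(2)}(s))$, while $\bar\xi$ is constant so $\dot{\bar\xi}=0$. Hence $(\bar y,\bar v)\in\mathcal T$. The boundary condition $G(\bar y(0),\bar y(1))\in C$ follows from $\bar y^{(1)}(0)=x_0$, $\bar\xi(0)=\bar\tau\in(0,T)$, the continuity identity $\bar y^{(2)}(0)=\tilde x(1)=\bar y^{(1)}(1)$, and $g(\bar y^{(1)}(1))=g(\bar x(\bar\tau))=0$, since $\bar\tau$ is a crossing time.

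Next, I take the $\varepsilon>0$ furnished by Proposition \ref{minGminF} and consider any competitor $(y,v)\in\mathcal T$ with $G(y(0),y(1))\in C$ satisfying $|y(0)-\bar y(0)|\leq\varepsilon'$ and $\|v-\bar v\|_{L^\infty(0,1;\R^{2m})}\leq\varepsilon'$ for some $\varepsilon'>0$ to be chosen. The third component of the dynamics forces $\xi(s)\equiv\tau$ with $\tau=\xi(0)\in(0,T)$, and closeness of $y(0)$ to $\bar y(0)$ makes $|\tau-\bar\tau|$ as small as desired. I then glue the two control pieces back together by setting
\[
\tilde v(s):=\begin{cases}v^{(1)}(s)&\text{if }s\in[0,1],\\ v^{(2)}(s-1)&\text{if }s\in(1,2],\end{cases}
\]
so that $\tilde v\in\widetilde{\mathcal U}$ and $\|\tilde v-\tilde u\|_{L^\infty(0,2;\R^m)}\leq\varepsilon'$. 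Using the constraints $y^{(1)}(0)=x_0$ and $y^{(2)}(0)=y^{(1)}(1)$, uniqueness of the Cauchy problem \eqref{cauchy-pbm3} shows that the associated solution $\tilde x_{\tilde v,\tau}$ satisfies $\tilde x_{\tilde v,\tau}(s)=y^{(1)}(s)$ on $[0,1]$ and $\tilde x_{\tilde v,\tau}(s)=y^{(2)}(s-1)$ on $[1,2]$. The crossing constraint $g(\tilde x_{\tilde v,\tau}(1))=g(y^{(1)}(1))=0$ follows from the last line of $G(y(0),y(1))\in C$, so $(\tilde v,\tau)$ is admissible for \eqref{TC3}.

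Choosing $\varepsilon'\leq\varepsilon$, Proposition \ref{minGminF} applied to $(\tilde v,\tau)$ yields
\[
\phi(\tilde x(2))+T-\bar\tau\leq\phi(\tilde x_{\tilde v,\tau}(2))+T-\tau,
\]
which, rewritten in the $y$-variables via $\tilde x(2)=\bar y^{(2)}(1)$, $\tilde x_{\tilde v,\tau}(2)=y^{(2)}(1)$, and $\bar\tau=\bar\xi(1)$, $\tau=\xi(1)$, becomes exactly $\psi(\bar y(1))\leq\psi(y(1))$, establishing the weak-minimum property \eqref{eq:def_min_p} with $r=\infty$. The main (but minor) obstacle is the bookkeeping: making sure that the $L^\infty$-closeness transfers correctly through the gluing and that the constancy of $\xi$ together with the mixed constraint $G(y(0),y(1))\in C$ really does realize every local competitor of \eqref{TC4} as a local competitor of \eqref{TC3}; everything else is a direct application of Proposition \ref{minGminF}.
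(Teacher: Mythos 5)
Your proposal is correct and follows essentially the same route as the paper's own proof: verify admissibility of $(\bar{y},\bar{v})$ directly, then map any nearby admissible competitor of \eqref{TC4} back to a competitor $(\tilde u',\tau')$ of \eqref{TC3} via the inverse gluing (using the constancy of $\xi$ and the constraint $G(y(0),y(1))\in C$), and invoke the weak minimality established in Proposition \ref{minGminF}. The only cosmetic difference is that you appeal explicitly to uniqueness for the Cauchy problem \eqref{cauchy-pbm3} to identify the glued trajectory, which the paper leaves as a routine check.
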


\begin{proof} The pair $(\tilde u,\bar{\tau})$ is a weak minimum of \eqref{TC3}. Let then $\eps>0$ be as in Definition \ref{min-faible1}.
Since $\frac{d\pi_{\bar{\tau}}}{ds}= \bar{\tau}$ (resp.\@ $\frac{d\pi_{\bar{\tau}}}{ds} = T - \bar{\tau}$) over $[0,1]$ (resp.\@ over $[1,2]$), the pair $(\bar{y},\bar{v})$ satisfies \eqref{cauchy-pbm4}. In addition, it is easily seen that $\bar{y}$ satisfies
$G(\bar{y}(0),\bar{y}(1)) \in C$ since $\tilde x(0)=x_0$, $\bar{\tau} \in (0,T)$, $\bar{y}^{(1)}(1)=\bar{y}^{(2)}(0)=\tilde x(1)$, and $g(\tilde x(1))=0$. 
Let us now check that $(\bar{y},\bar{v})$ is is a weak minimum of \eqref{TC4} in the sense of Definition \ref{def2-min-faible}. Doing so, take a pair $(y,v) \in \mathcal{T}$ satisfying 
$$
\|v- \bar{v} \|_{L^\infty(0,1;\R^{2m})}\leq \eps, \quad
|y(0)- \bar{y}(0)| \leq \varepsilon,
$$ 
and such that $y(\cdot):=(y^{(1)}(\cdot),y^{(2)}(\cdot),\xi(\cdot))$ satisfies $G(y(0),y(1)) \in C$. Note that $\xi(\cdot)$ is constant over $[0,1]$ with $\xi(0) \in (0,T)$ as $G(y(0),y(1))\in C$. 
Consider the inverse transformation to the one used in \eqref{def-control-v} and define a pair   
$(\tilde x'(\cdot),\tilde u'(\cdot))$ over $[0,2]$ with $\tilde u' \in \mathcal{\tilde U}$, as well as a real number $\tau' \in (0,T)$ by
$$
\left|
\begin{array}{l}
\tilde x'(s):=y^{(1)}(s),\\
\tilde x'(s+1):=y^{(2)}(s),\\
\tau':=\xi(0),
\end{array}
\right.
\quad \mathrm{and} \quad
\left|
\begin{array}{l}
\tilde u'(s):=v^{(1)}(s),\\
\tilde u'(s+1):=v^{(2)}(s), 
\end{array}
\right.
\quad 
$$
for a.e.\@ $s \in [0,1]$. Using that $G(y(0),y(1))\in C$, 
we can check that $\tilde x'$ is continuous at $s=1$, that it is a solution of \eqref{cauchy-pbm3} associated with the control $\tilde u'$ and $\tau'$, that $\tilde x'(0)=x_0$, and that $g(\tilde x'(1))=0$. 
It follows that $\tilde u'$ is an admissible control for \eqref{TC3}.
In view of the relations between the controls in $\mathcal{V}$ and in $\tilde{\mathcal{U}}$, it is straightforward to check that the above transformation 
satisfies:
$$
\|\tilde u-\tilde u'\|_{L^\infty(0,2;\R^{m})} \leq \|v- \bar{v} \|_{L^\infty(0,1;\R^{2m})}\leq \eps \quad \text{and} \quad
|\tau-\tau'| = |\xi(0) - \bar{\xi}(0)| \leq |y(0)-\bar{y}(0)| \leq \eps.
$$ 
To conclude, since $\tilde u$ is a weak minimum of \eqref{TC3}, we deduce that 
$$
\phi(\tilde x(2))+T-\tau \leq \phi(\tilde x'(2))+T-\tau', 
$$
which is exactly saying that 
$\psi(\bar{y}(1))\leq \psi(y(1))$,
and that $(\bar{y},\bar{v})$ is a weak minimum of \eqref{TC4} as was to be proved. 
\end{proof}

\begin{rem}
The reformulation that we have performed remains valid for variants of the time crisis problem (in the sense that it would still yield an optimal control problem with a classical structure and smooth data functions).
For example, we could consider the case where 
admissible controls are with values in a subset $U_1$ when the state is in $K$, and in another subset $U_2$ when the state belongs to $K^c$. 
From a practical point of view, this situation typically happens if one is unable to use the same controls in both sets $K$ and $K^c$ 
(similar situations occur in sampled-data control, see, {\it{e.g.}}, {\rm{\cite{bourdin}}}).
It would also be possible to consider the situation with two different dynamics and two different integral costs on $K$ and $K^c$.
\end{rem}

\section{Necessary optimality conditions: case of a single crossing point}{\label{NOC-sec}}

We derive in this section first- and second-order optimality conditions for the pair $(\bar{y},\bar{v})$, defined by \eqref{def-control-v} and weak solution to \eqref{TC4}.
The obtained optimality conditions are then transformed into optimality conditions for the solution $\bar{u}$ to Problem \eqref{TC1}, with the help of the transformation that has been analyzed in Proposition \ref{equivTC1-TC4}.

As in the previous section, we work under Assumption (H3) with $r=1$. Since the third component of $\bar{y}$ is constant, we always denote it by $\bar{\xi}$ (instead of $\bar{\xi}(s)$).

\subsection{First-order optimality conditions} \label{NOC1-sec}

For the derivation of first-order optimality conditions for \eqref{TC4}, we introduce the following variables
\begin{equation*}
q:=
\left[
\begin{array}{c}
p^{(1)}\\
p^{(2)}\\
\lambda
\end{array}
\right]\in \R^{n+n+1}, \quad
\beta:=
\left[
\begin{array}{c}
\beta_1 \\ \beta_2 \\ \beta_3 \\ \beta_4
\end{array}
\right] \in \R^{n+1+n+1}, \quad
\mu:= \left[
\begin{array}{c}
\mu^{(1)} \\
\mu^{(2)}
\end{array}
\right] \in \R^{l+l}.
\end{equation*}
The variable $q$ denotes the co-state associated with $y$, the variable $\beta$ the Lagrange multiplier associated with $G$, and the variable $\mu$ denotes the Lagrange multiplier associated with the control constraints
\begin{equation*}
c(v^{(1)}(s)) \leq 0, \quad
c(v^{(2)}(s)) \leq 0, \quad \mathrm{for} \; \mathrm{a.e.} \; s\in [0,1]. 
\end{equation*}

The Hamiltonian associated with \eqref{TC4} is the function 
\be{\label{Hamil-TC4}}
\begin{array}{rl}
\hat H:\R^{2n+1} \times \R^{2n+1}\times \R^{2m} &\rightarrow \R \\
(y,q,v) &\mapsto q \cdot F(y,v). 
\end{array}
\ee
We also define the augmented Hamiltonian $\hat{H}^a$
\be{\label{Hamil_aug}}
\begin{array}{rl}
\hat{H}^a:\R^{2n+1} \times \R^{2n+1} \times \R^{2m} \times \R^{2l} &\rightarrow \R \\
(y,q,v,\mu) & \mapsto \hat{H}(y,q,v) + \mu^{(1)} \cdot c(v^{(1)}) + \mu^{(2)} \cdot c(v^{(2)}). 
\end{array}
\ee

In the following definition, $\R_+$ denotes the set of nonnegative real numbers.

\begin{defi} \label{defi:lag_mult_p}
A triplet $(\alpha,\beta,\mu) \in \R_+ \times \R^{2n+2} \times L^\infty(0,1;\R^{2l})$ is called \emph{Lagrange multiplier} (associated with $(\bar{y},\bar{v})$ and problem \eqref{TC4}) if the following conditions are satisfied:
\begin{itemize}
\item The triplet $(\alpha,\beta,\mu)$ is non-null, {\it{i.e.}}, $\alpha + |\beta| + \| \mu \|_{L^\infty(0,1;\R^{2l})} >0$ and is such that
\begin{equation} \label{normality-TC4}
\beta_2= 0, \quad
\mu(s) \geq 0, \quad
\mu^{(1)}(s) \cdot c(\bar{v}^{(1)}(s)) + \mu^{(2)}(s) \cdot c(\bar{v}^{(2)}(s))= 0
\quad \mathrm{for\ a.e.} \; s\in [0,1].
\end{equation}
\item There exists an absolutely continuous function $q:[0,1] \rightarrow \R^{2n+1}$ satisfying the following adjoint equation:
\be{\label{adjoint-TC4}}
\frac{dq}{ds}(s)=-\nabla_y \hat H(\bar{y}(s),q(s),\bar{v}(s)) \quad \mathrm{for\ a.e.} \; s\in [0,1],
\ee
and the following transversality conditions at $s=0$ and $s=1$:
\be{\label{transversality-TC4-0}}
\begin{array}{rl}
-q(0)&=\nabla_{y_0} G(\bar{y}(0),\bar{y}(1)) \beta, \vspace{0.1cm}\\
q(1)&=\nabla_{y_1} G(\bar{y}(0),\bar{y}(1)) \beta + \alpha \nabla \psi(\bar{y}(1)).
\end{array}
\ee
\item The augmented Hamiltonian is stationary with respect to $v$:
\be{\label{PMP-TC4}}
\nabla_v \hat{H}^a(\bar{y}(s),q(s),\bar{v}(s),\mu(s)) = 0, \\
\quad \mathrm{for\ a.e.} \; s\in [0,1].
\ee
\end{itemize}
\end{defi}

We denote by $\hat{\Lambda}_L(\bar{y},\bar{v})$ the set of Lagrange multipliers. Note that this set possibly contains singular Lagrange multipliers ({\it{i.e.}}, multipliers for which $\alpha= 0$). Let us mention that $\beta_2=0$, since the constraint $G_2$ is inactive at $(\bar{y}(0),\bar{y}(1))$. We also note that for all $(\alpha,\beta,\mu) \in \hat{\Lambda}_L(\bar{y},\bar{v})$ and for all $\theta > 0$, the triplet $(\theta \alpha,\theta \beta, \theta \mu)$ also lies in $\hat{\Lambda}_L(\bar{y},\bar{v})$. This  will enable us later to normalize Lagrange multipliers.

\begin{lem} \label{lem:oc_P}
The set of Lagrange multipliers $\hat{\Lambda}_L(\bar{y},\bar{v})$ is non-empty.
\end{lem}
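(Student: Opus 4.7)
The proof will be a direct invocation of the classical Lagrange multiplier rule for weak minima of Mayer-type optimal control problems with finite-dimensional mixed initial-final constraints and pure control inequality constraints. I would present the argument in three steps.

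First, I would recast problem \eqref{TC4} in the abstract framework of a smooth Mayer problem: the dynamics $F$, boundary map $G$, cost $\psi$, and constraint function $c$ are all of class $C^2$; the set $C$ is the product of an equality constraint on $y^{(1)}(0)$, an inequality constraint $\xi_0 \in (0,T)$, and equality constraints on $y^{(2)}(0)-y^{(1)}(1)$ and $g(y^{(1)}(1))$. Since $\bar{\xi}=\bar{\tau}\in(0,T)$, the inequality constraint is strictly inactive at $(\bar{y}(0),\bar{y}(1))$, which already forces $\beta_2=0$ in any multiplier and means this component contributes no qualification requirement.

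Second, I would verify the constraint qualification needed to obtain an $L^\infty$-multiplier $\mu$ for the pure control constraints $c(v^{(i)}(s))\leq 0$, $i=1,2$. This is exactly where Assumption (H1) is used: the inward pointing condition \eqref{IPC} for $\bar{u}$ translates, via the change of variable $\pi_{\bar{\tau}}$, into an analogous inward pointing condition for $\bar{v}^{(1)}$ and $\bar{v}^{(2)}$ on $[0,1]$. This condition is precisely the Robinson-type qualification ensuring, by the classical results recalled in \cite{BDP14,BH09,BO10}, that Lagrange multipliers associated with the control constraint lie in $L^\infty(0,1;\R^l)$ (rather than merely in the dual of $L^\infty$).

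Third, with these qualifications in place, I would invoke the standard first-order necessary optimality theorem for a weak minimum of a Mayer problem (for instance, the Lagrange rule formulated in \cite{dmitruk1,dmitruk2}) applied to $(\bar{y},\bar{v})$. This furnishes a non-trivial triplet $(\alpha,\beta,\mu)\in\R_+\times\R^{2n+2}\times L^\infty(0,1;\R^{2l})$ together with an absolutely continuous co-state $q:[0,1]\to\R^{2n+1}$ satisfying the adjoint equation \eqref{adjoint-TC4}, the transversality conditions \eqref{transversality-TC4-0}, the sign and complementarity conditions \eqref{normality-TC4}, and the stationarity of the augmented Hamiltonian \eqref{PMP-TC4}. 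The only subtle point is checking that the constraint qualification on $G$ and the inward pointing condition together suffice to place $\mu$ in $L^\infty$ rather than in a larger dual space; as noted above, this is the content of the cited references and poses no obstacle here. This proves $\hat{\Lambda}_L(\bar{y},\bar{v})\neq\emptyset$.
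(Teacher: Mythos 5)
Your proposal is correct and follows essentially the same strategy as the paper: recognize \eqref{TC4} as a smooth Mayer problem with mixed initial--final constraints and pure control constraints, check that the only needed qualification is the inward pointing condition \eqref{IPC} inherited from (H1) through the change of variable, observe that the constraint $\xi_0\in(0,T)$ is inactive (whence $\beta_2=0$), and then invoke a first-order multiplier rule from the literature. The one place where the two arguments diverge is the form of the black-box theorem: the paper's reference result, \cite[Theorem 3.1]{BDP14}, is stated for \emph{Pontryagin} minima, so the authors must first insert the localizing constraint $\|v-\bar v\|_{L^\infty}\le\varepsilon$ (problem \eqref{TC4_loc}) to turn the weak minimum $(\bar y,\bar v)$ into a Pontryagin minimum of an auxiliary problem, and then note that the localizing constraint is inactive so the multiplier sets coincide. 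You instead appeal directly to a Lagrange multiplier rule valid for weak minima; such results do exist (e.g.\ in \cite{BO10} and in the Dmitruk--Kaganovich line of work), so this is legitimate, though your specific citations \cite{dmitruk1,dmitruk2} are maximum-principle (Pontryagin-form) statements and would themselves require the same localization device, or should be replaced by a genuinely weak-minimum reference. Either way the substance --- smoothness of the data, the inward pointing condition guaranteeing $\mu\in L^\infty$, and non-triviality of the resulting triplet --- is identical, and your step concerning the $L^\infty$ regularity of $\mu$ is exactly the point the paper also delegates to \cite{BDP14,BH09,BO10}.
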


\begin{proof}
Our proof is based on results of \cite{BDP14}.
In that reference, two kinds of multipliers are considered (see \cite[Definition 2.7]{BDP14}): Lagrange multipliers, corresponding to Definition \ref{defi:lag_mult_p} above, and Pontryagin multipliers, which are the Lagrange multipliers satisfying Pontryagin's Principle.

It is shown (see \cite[Theorem 3.1]{BDP14}) that for a Pontryagin minimum, the set of Pontryagin multipliers is non-empty and thus the set of Lagrange multipliers is also non-empty. The pair $(\bar{y},\bar{v})$ is a Pontryagin minimum to the following optimal control problem, obtained by adding to problem \eqref{TC4} a localizing constraint:
\begin{equation} \label{TC4_loc}
\inf_{(y,v) \in \mathcal{T}}  \psi(y(1)) \quad \mathrm{s.t. } \;
\begin{cases}
\begin{array}{l}
G(y(0),y(1)) \in C, \\
\| v - \bar{v} \|_{L^\infty(0,1;\R^{2l})} \leq \varepsilon,
\end{array}
\end{cases}
\end{equation}
where $\varepsilon > 0$ is such that \eqref{eq:def_min_p} holds true (with $r=\infty$). Since the localizing constraint is not active at $(\bar{y},\bar{v})$, the set $\hat{\Lambda}_L(\bar{y},\bar{v})$ is equal to the set of Lagrange multipliers associated with \eqref{TC4_loc}, which is non-empty by \cite[Theorem 3.1]{BDP14}. 

The application of \cite[Theorem 3.1]{BDP14} requires some regularity assumptions on the data, which are satisfied here by $f$, $g$, $c$, $\psi$, and the localizing constraint, and requires the inward pointing condition, which here directly follows from \eqref{IPC}. This concludes the proof.
\end{proof}

Next we rearrange the obtained optimality conditions, so that they can be easily translated into optimality conditions for the time crisis problem.
Note that the Hamiltonian can be expressed as follows: 
$$\hat H(y,q,v)
=\xi H(y^{(1)},p^{(1)},v^{(1)})+(T-\xi)H(y^{(2)},p^{(2)},v^{(2)}) + \lambda \cdot \xi,$$ 
where the Hamiltonian $H$ (associated with \eqref{TC1}) is defined as
\be{\label{Hamil-TC2}}
\begin{array}{rl}
H:\R^{n} \times \R^{n}\times \R^{m} &\rightarrow \R \\
(x,p,u) &\mapsto H(x,p,u):=p\cdot f(x,u). 
\end{array}
\ee
Note that this definition of the Hamiltonian does not take into account the indicator function.
Similarly, we can express the augmented Hamiltonian as follows:
\begin{equation*}
\hat{H}^a(y,q,v,\mu)
= \xi H^a \big( y^{(1)},p^{(1)},v^{(1)},{\textstyle \frac{1}{\xi} }\mu^{(1)} \big)
+ (T-\xi) H^a \big( y^{(2)},p^{(1)},v^{(2)}, {\textstyle \frac{1}{T-\xi} } \mu^{(2)} \big) + \lambda \cdot \xi,
\end{equation*}
where the augmented Hamiltonian $H^a$ (associated with \eqref{TC1}) is defined as
\be{\label{Hamil_aug-TC2}}
\begin{array}{rl}
H^a:\R^{n} \times \R^{n}\times \R^{m} \times \R^l & \rightarrow \R \\
(x,p,u,\nu) & \mapsto H^a(x,p,u,\nu):= p\cdot f(x,u) + \nu \cdot c(u).
\end{array}
\ee
Hence, conditions \eqref{adjoint-TC4}-\eqref{transversality-TC4-0}-\eqref{PMP-TC4} can be re-written as follows:
\begin{itemize}
\item The adjoint vector satisfies the following equations almost everywhere over $[0,1]$: 
$$
\begin{array}{cl}
\ds\frac{dp^{(1)}}{ds}&=-\xi \nabla_x H(\bar{y}^{(1)}(s),p^{(1)}(s),\bar{v}^{(1)}(s)), \vspace{0.1cm}\\
\ds\frac{dp^{(2)}}{ds}&=-(T-\xi) \nabla_x H(\bar{y}^{(2)},p^{(2)}(s),\bar{v}^{(2)}(s)),\vspace{0.1cm}\\
\ds\frac{d\lambda}{ds}&= -H(\bar{y}^{(1)}(s),p^{(1)}(s),\bar{v}^{(1)}(s)) + H(\bar{y}^{(2)}(s),p^{(2)}(s),\bar{v}^{(2)}(s)). 
\end{array}
$$
The Jacobian matrix of $G$ at $(\bar{y}(0), \bar{y}(1))$ is the following block matrix (the numbers indicated at the braces indicate the dimension of the row or the column):
$$
DG(\bar{y}(0),\bar{y}(1))={\small{
\overbrace{
\left[
\begin{array}{cccccc}
I_n & 0 & 0 & 0 &0 &0 \\
0 & 0& 1 & 0 & 0 &0 \\
0 & I_n &0 & -I_n & 0 & 0\\
0 & 0 & 0 & D g(\bar{y}^{(1)}(1)) & 0 & 0
\end{array}
\right]}^{(n,n,1,n,n,1)}
\left.
\vphantom{
\left[
\begin{array}{cccccc}
I_n & 0 & 0 & 0 &0 &0 \\
0 & 0& 1 & 0 & 0 &0 \\
0 & I_n &0 & -I_n & 0 & 0\\
0 & 0 & 0 & \nabla g(\bar{y}^{(1)}(1))^{\top} & 0 & 0
\end{array}
\right]}\right\}
\begin{array}{c}
n\\
1\\
n\\
1
\end{array}
}}
$$
thus, the transversality conditions read
$$
\begin{array}{rl}
-p^{(1)}(0)& = \beta_1,\\
-p^{(2)}(0)& = \beta_3,\\
-\lambda(0) & = \beta_2=0,
\end{array}
\quad \mathrm{and} \quad 
\begin{array}{rl}
p^{(1)}(1)&=-\beta_3+\beta_4 \nabla g(\bar{y}^{(1)}(1)),\\
p^{(2)}(1)&=\alpha \nabla \phi(\bar{y}^{(2)}(1)),\\
\lambda(1)&=-\alpha.
\end{array}
$$
\item The stationarity condition \eqref{PMP-TC4} is equivalent to
\begin{align*}
\nabla_{u} H^a(\bar{y}^{(1)}(s),p^{(1)}(s),\bar{v}^{(1)}(s),{\textstyle \frac{1}{\xi} } \mu^{(1)}(s))= 0,
\quad \mathrm{a.e.} \; s \in [0,1], \\
\nabla_{u} H^a(\bar{y}^{(2)}(s),p^{(2)}(s),\bar{v}^{(2)}(s),{\textstyle \frac{1}{T-\xi} } \mu^{(2)}(s))= 0,
\quad \mathrm{a.e.} \; s \in [0,1].
\end{align*}
\end{itemize}
Note that for a given $(\alpha,\beta,\mu) \in \hat \Lambda_L(\bar{y},\bar{v})$, there exists a unique costate $q$ satisfying the adjoint equation and the transversality conditions, since the latter defines in a unique way the terminal value $(p^{(1)}(1),p^{(2)}(1),\lambda(1))$.
We also deduce from the conditions $\lambda(0)=0$, $\lambda(1)= - \alpha$, and from the state equation that
\begin{equation} \label{noc_tau_TC4}
- \int_0^1 H(\bar{y}^{(1)}(s),p^{(1)}(s),\bar{v}^{(1)}(s))  \ \dd s + \int_0^1 H(\bar{y}^{(2)}(s),p^{(2)}(s),\bar{v}^{(2)}(s))  \ \dd s = - \alpha.
\end{equation}

We are now ready to convert the obtained optimality conditions  for the time crisis problem. In the definition below and in the second-order analysis, we make use of the following function:
\begin{equation} \label{eq:rho}
\rho_\tau(t)=
\frac{1}{\tau} \quad \text{ if $t \in (0,\tau)$}, \quad
\rho_\tau(t)= \frac{-1}{T-\tau} \quad \text{ if $t \in (\tau,T)$}.
\end{equation}

\begin{defi} \label{defi:lag_mult_tc}
A triplet $(\alpha,\gamma,\nu) \in \R_+ \times \R \times L^\infty(0,T;\R^{l})$ is called \emph{Lagrange multiplier} (associated with $\bar{u}$ and problem \eqref{TC1}) if the following conditions are satisfied:
\begin{itemize} 
\item The triplet $(\alpha,\gamma,\nu)$ is non-zero and the Lagrange multiplier $\nu$ satisfies the following sign and complementarity conditions:
\be{\label{sign_comp_TC2}}
\nu(t) \geq 0, \quad
\nu(t) \cdot c(\bar{u}(t))= 0, \quad
\quad \mathrm{for\ a.e.} \; t\in [0,T]. 
\ee
\item There exists a function $p \colon [0,T] \rightarrow \R^n$, whose restrictions to $[0,\bar{\tau})$ and $(\bar{\tau},T]$ are absolutely continuous, which
satisfies the following adjoint equation
\be{\label{adjoint-TC2}}
\begin{array}{ll}
\dot{p}(t)&= -\nabla_x H(\bar{x}(t),p(t),\bar{u}(t)) \quad \mathrm{for\ a.e.} \; t\in [0,T],\\
p(T)&= \alpha \nabla \phi(\bar{x}(T)),
\end{array}
\ee
and the following jump condition at the crossing time $\bar{\tau}$:
\be{\label{formule-saut0}}
p(\bar{\tau}^+)-p(\bar{\tau}^-) = \gamma \nabla g(\bar{x}(\bar{\tau})).
\ee
\item The augmented Hamiltonian is stationary with respect to $v$:
\be{\label{PMP-TC2}}
\nabla_u H^a(\bar{x}(t),p(t),\bar{u}(t),\nu(t))=0
\quad \mathrm{for\ a.e.} \; t\in [0,T]. 
\ee
\item The following relation holds true
\be{\label{noc_tau_TC2}}
\int_0^T \rho_{\bar{\tau}}(t) H(\bar{x}(t),p(t),\bar{u}(t))  \ \dd t
= \alpha. 
\ee
\end{itemize}
\end{defi}

We denote by $\Lambda_L(\bar{u},\bar{\tau})$ the set of Lagrange multipliers associated with $\bar{u}$ and problem \eqref{TC1}.

\begin{lem}{\label{ordre1-1}}
The set of Lagrange multipliers $\Lambda_L(\bar{u},\bar{\tau})$ is non-empty.
\end{lem}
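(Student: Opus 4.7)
The plan is to construct the Lagrange multiplier for \eqref{TC1} from the one produced by Lemma \ref{lem:oc_P}, by undoing the time reparametrization and the augmentation used in Section \ref{transfo-sec}. Concretely, I would first invoke Lemma \ref{lem:oc_P} to obtain a non-null triplet $(\alpha,\beta,\mu) \in \hat{\Lambda}_L(\bar{y},\bar{v})$ together with the associated costate $q=(p^{(1)},p^{(2)},\lambda)$. Then I would define $p:[0,T]\to\R^n$ by the pull-back under $\pi_{\bar{\tau}}$, namely
\begin{equation*}
p(t) := \begin{cases} p^{(1)}(t/\bar{\tau}) & \text{if } t \in [0,\bar{\tau}), \\ p^{(2)}((t-\bar{\tau})/(T-\bar{\tau})) & \text{if } t \in (\bar{\tau},T], \end{cases}
\end{equation*}
together with $\gamma:=-\beta_4$ and
\begin{equation*}
\nu(t) := \tfrac{1}{\bar{\tau}} \mu^{(1)}(t/\bar{\tau}) \ \text{ on } (0,\bar{\tau}), \qquad \nu(t) := \tfrac{1}{T-\bar{\tau}} \mu^{(2)}((t-\bar{\tau})/(T-\bar{\tau})) \ \text{ on } (\bar{\tau},T).
\end{equation*}

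Next I would verify the four groups of conditions in Definition \ref{defi:lag_mult_tc} by direct change of variables. Using the rewriting of the adjoint equations for $p^{(1)}$ and $p^{(2)}$ displayed just after Lemma \ref{lem:oc_P}, the factors $\bar{\xi}=\bar{\tau}$ and $T-\bar{\tau}$ cancel with the Jacobians of $\pi_{\bar{\tau}}^{-1}$, giving \eqref{adjoint-TC2} on each of $(0,\bar{\tau})$ and $(\bar{\tau},T)$; the terminal condition follows from $p(T)=p^{(2)}(1)=\alpha\nabla\phi(\bar{x}(T))$. The jump \eqref{formule-saut0} is obtained by combining the transversality identities
\begin{equation*}
p^{(1)}(1)=-\beta_3+\beta_4\nabla g(\bar{x}(\bar{\tau})),\qquad p^{(2)}(0)=-\beta_3,
\end{equation*}
which yield $p(\bar{\tau}^+)-p(\bar{\tau}^-)=-\beta_4\nabla g(\bar{x}(\bar{\tau}))=\gamma\nabla g(\bar{x}(\bar{\tau}))$. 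The stationarity \eqref{PMP-TC2} follows from the two stationarity identities on $[0,1]$ rewritten after the change $s\mapsto t$, with the chosen scaling of $\nu$; the sign and complementarity conditions \eqref{sign_comp_TC2} are inherited from \eqref{normality-TC4}. Finally, \eqref{noc_tau_TC2} is a direct consequence of \eqref{noc_tau_TC4} after the two changes of variables on $[0,\bar{\tau}]$ and $[\bar{\tau},T]$, recognizing $\rho_{\bar{\tau}}$ as in \eqref{eq:rho}.

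The step I expect to require the most care is establishing non-triviality of $(\alpha,\gamma,\nu)$. Assume for contradiction that $\alpha=0$, $\gamma=0$, and $\nu\equiv 0$. Then $\mu\equiv 0$ and $\beta_4=0$; the terminal condition gives $p^{(2)}(1)=0$, and since $p^{(2)}$ solves a linear homogeneous ODE backward in $s$, we obtain $p^{(2)}\equiv 0$, so $\beta_3=-p^{(2)}(0)=0$. The transversality condition for $p^{(1)}(1)$ then forces $p^{(1)}(1)=0$, hence $p^{(1)}\equiv 0$ by the same backward uniqueness argument, and therefore $\beta_1=-p^{(1)}(0)=0$. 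Since $\beta_2=0$ is built in, the whole triplet $(\alpha,\beta,\mu)$ vanishes, contradicting Lemma \ref{lem:oc_P}. This completes the proof.
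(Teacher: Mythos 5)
Your proof follows the paper's argument essentially verbatim: invoke Lemma \ref{lem:oc_P}, pull the multiplier back through $\pi_{\bar{\tau}}$, verify each condition of Definition \ref{defi:lag_mult_tc} by the change of variables, and obtain non-triviality by propagating the assumed nullity of $(\alpha,\gamma,\nu)$ back to that of $(\alpha,\beta,\mu)$ via backward uniqueness for the adjoint equation. The only (cosmetic) difference is your choice $\gamma=-\beta_4$ where the paper writes $\gamma=\beta_4$; your sign is the one consistent with the displayed transversality identities $p(\bar{\tau}^-)=-\beta_3+\beta_4\nabla g(\bar{x}(\bar{\tau}))$ and $p(\bar{\tau}^+)=-\beta_3$.
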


\begin{proof}
These conditions follow from those obtained in Lemma \ref{lem:oc_P}.
Let us recall that the pair $(\bar{u},\bar{x})$ is related to $(v,y)$ by:
$$
\left|
\begin{array}{clll}
\bar{x}(t) & = \bar{y}^{(1)}(\pi_{\bar{\tau}}^{-1}(t)) & \mathrm{if} & t\in [0,\bar{\tau}],\\
\bar{x}(t) & = \bar{y}^{(2)}(\pi_{\bar{\tau}}^{-1}(t)) & \mathrm{if} & t\in [\bar{\tau},T],
\end{array}
\right.
\quad \mathrm{and} \quad 
\left|
\begin{array}{clll}
\bar{u}(t)& = \bar{v}^{(1)}(\pi_{\bar{\tau}}^{-1}(t)) & \mathrm{if} & t\in (0,\bar{\tau}),\\
\bar{u}(t)& = \bar{v}^{(2)}(\pi_{\bar{\tau}}^{-1}(t)) & \mathrm{if} & t\in (\bar{\tau},T). 
\end{array}
\right.
$$
The variable $\bar{\xi}$ is by construction constant, equal to $\bar{\tau}$.
We also recall that the optimality conditions obtained in Lemma \ref{lem:oc_P} involve multipliers $(\alpha,\beta,\mu)$. For the announced result, we keep the same value of $\alpha$, take $\gamma=\beta_4$ and define $p$ and $\nu$ as follows:
$$
\left|
\begin{array}{rlll}
p(t)&:=p^{(1)}(\pi_{\bar{\tau}}^{-1}(t)) & \mathrm{if}& t\in (0,{\bar{\tau}}), \\
p(t)&:=p^{(2)}(\pi_{\bar{\tau}}^{-1}(t)) & \mathrm{if}& t\in ({\bar{\tau}},T),
\end{array}
\right.
\quad
\left|
\begin{array}{rlll}
\nu(t)&:= {\textstyle \frac{1}{{\bar{\tau}}} } \mu^{(1)}(\pi_{\bar{\tau}}^{-1}(t)) & \mathrm{if}& t\in [0,{\bar{\tau}}), \\
\nu(t)&:= {\textstyle \frac{1}{T-{\bar{\tau}}}} \mu^{(2)}(\pi_{\bar{\tau}}^{-1}(t)) & \mathrm{if}& t\in ({\bar{\tau}},T],
\end{array}
\right.
$$
We note first that
\begin{align*}
\frac{dp(t)}{dt}= \frac{1}{{\bar{\tau}}} \frac{dp^{(1)}}{ds} \Big|_{s= \pi_{\bar{\tau}}^{-1}(t)} \text{ if } t\in (0,\bar{\tau}),
\qquad
\frac{dp(t)}{dt}= \frac{1}{T-{\bar{\tau}}} \frac{dp^{(2)}}{ds} \Big|_{s= \pi_{\bar{\tau}}^{-1}(t)} \text{ if } t\in ({\bar{\tau}},T).
\end{align*}
The costate equation \eqref{adjoint-TC2} follows then from \eqref{adjoint-TC4}.
Moreover, we have $p({\bar{\tau}}^-)=-\beta_3+\beta_4 \nabla g(\bar{x}({\bar{\tau}}))$ and $p({\bar{\tau}}^+)=-\beta_3$, thus the jump condition at ${\bar{\tau}}$ holds true. The terminal condition at time $T$ follows directly from the transversality condition $p^{(2)}(1)=\alpha \nabla \phi(\bar{y}^{(2)}(1))$.
Relations \eqref{sign_comp_TC2}, \eqref{PMP-TC2}, and \eqref{noc_tau_TC2} directly follow from \eqref{normality-TC4}, \eqref{PMP-TC4}, and \eqref{noc_tau_TC4}. It remains to check that $(\alpha,\gamma,\nu)$ is non-zero. If that was the case, then we would have $\mu= 0$ and $p=0$, implying then that $p^{(1)}$ and $p^{(2)}$ are null and finally that $\beta_1$ and $\beta_3$ are also null, which is not possible since $(\alpha,\beta,\mu)$ is itself non-zero.
\end{proof}

Let us mention that for a given $(\alpha,\gamma,\nu)$, the associated costate $p$ is unique, as can be easily verified.
We also mention that the construction detailed in the proof of Lemma \ref{ordre1-1} defines a bijection between $\hat{\Lambda}_L(\bar{y},\bar{v})$ and $\Lambda_L(\bar{u},\bar{\tau})$.

\subsection{Second-order optimality conditions}

We first introduce the critical cone $\hat{C}(\bar{y},\bar{v})$ and a quadratic form $\hat{\Omega}$ that will be used to formulate the second-order necessary optimality conditions associated with $(\bar{y},\bar{v})$. We will make use of the following notation:
\begin{equation*}
\delta y=
\left[
\begin{array}{c}
\delta y^{(1)} \\
\delta y^{(2)} \\
\delta \xi
\end{array}
\right], \quad
\delta v= \left[
\begin{array}{c}
\delta v^{(1)} \\
\delta v^{(2)}
\end{array}
\right].
\end{equation*}
Consider first the following differential system, obtained by linearizing the state equation \eqref{cauchy-pbm4}:
\begin{equation} \label{eq:lin_dynamics}
\frac{d}{ds} \delta y(s)= DF[s](\delta y(s),\delta v(s)), \quad \text{for a.e.\@ $s \in (0,1)$},
\end{equation}
where $[s]$ is used as a shortening of $(\bar{y}(s),\bar{v}(s))$.
The critical cone $\hat{C}(\bar{y},\bar{v})$ is now defined as follows:
\begin{equation*}
\hat{C}(\bar{y},\bar{v}):=
\left\{
\begin{array}{l}
(\delta y, \delta v) \in H^1(0,1;\R^{2n+1}) \times L^2(0,1;\R^{2l}),\, \big|\, \text{\eqref{eq:lin_dynamics} is satisfied,} \\[0.5em]
\qquad D \psi(\bar{y}(1)) \delta y(1) \leq 0, \\[0.5em]
\qquad D G_i(\bar{y}(0),\bar{y}(1))(\delta y(0), \delta y(1)) = 0, \text{ for $i=1,3,4$}, \\[0.5em]
\qquad c_i(\bar{v}^{(1)}(s))= 0 \Longrightarrow Dc_i(\bar{v}^{(1)}(s))\delta v^{(1)}(s)= 0, \text{ $\forall i=1,...,l$, for a.e.\@ $s \in (0,1)$ }, \\[0.5em]
\qquad c_i(\bar{v}^{(2)}(s))= 0 \Longrightarrow Dc_i(\bar{v}^{(2)}(s))\delta v^{(2)}(s)= 0, \text{ $\forall i=1,...,l$, for a.e.\@ $s \in (0,1)$ }
\end{array}
\right\}.
\end{equation*}
In the above definition, the space $H^1(0,1;\R^{2n+1})$ is the Sobolev space of time functions with a weak derivative in $L^2(0,1;\R^{2n+1})$. The constraint $G_2$ being inactive at $(\bar{y}(0),\bar{y}(1))$, it is not taken into account in the definition of the critical cone.
\begin{rem}
The set $\hat{C}(\bar{y},\bar{v})$ is in general referred to as \emph{strict} critical cone and the critical cone is actually a larger set than $\hat{C}(\bar{y},\bar{v})$. Still we use the terminology critical cone for $\hat{C}(\bar{y},\bar{v})$, for simplicity, and we note that the two cones are equal under a strict complementarity condition, see {\rm{\cite[Remark 4.8]{BDP14}}}.
\end{rem}
We also consider a quadratic form $\hat{\Omega}$, defined for $[\alpha,\beta,\mu] \in \hat{\Lambda}_L(\bar{y},\bar{v})$ and for $(\delta y, \delta v) \in H^1(0,1;\R^{2n+1}) \times L^2(0,1;\R^{2l})$ by
\begin{align}
\hat{\Omega}[\alpha,\beta,\mu](\delta y,\delta v)
:= \ & \alpha D^2 \psi(\bar{y}(1))(\delta y(1))^2
+ \beta \cdot D^2 G(\bar{y}(0),\bar{y}(1))(\delta y(0),\delta y(1))^2 \notag \\
& \qquad + \int_0^1 D^2 \hat{H}^a[s]( \delta y(s),\delta v(s))^2  \ \dd s,
\end{align}
where $q$ is the unique costate associated with $[\alpha,\beta,\mu]$.
In the above expression, the notation $[s]$ is a shortening of $(\bar{y}(s),q(s),\bar{v}(s),\mu(s))$. The Hessian of $\hat{H}^a$ is only considered with respect to the variables $y$ and $v$.
We have the following result.

\begin{lem} \label{lemma:cn2_p}
For all $(\delta y, \delta v) \in \hat{C}(\bar{y},\bar{v})$, there exists $(\alpha,\beta,\mu) \in \hat{\Lambda}_L(\bar{y},\bar{v})$ such that
\begin{equation}
\hat{\Omega}[\alpha,\beta,\mu](\delta y,\delta v) \geq 0.
\end{equation} 
\end{lem}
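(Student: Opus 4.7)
The plan is to invoke a standard second-order necessary optimality condition for weak minima of smooth Mayer-type optimal control problems with mixed initial--final constraints and pointwise control inequalities, as developed in \cite{BDP14}. Exactly as in the proof of Lemma \ref{lem:oc_P}, I first replace \eqref{TC4} by the localized problem \eqref{TC4_loc}; since the localizing constraint is inactive at $(\bar{y},\bar{v})$, the two sets of Lagrange multipliers coincide, and $(\bar{y},\bar{v})$ remains a weak minimum for the localized problem. It then suffices to verify the hypotheses of the abstract theorem and read off the conclusion.

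The regularity assumptions follow immediately from the $C^2$ regularity of $f$, $g$, $\phi$, $c$ and hence of $F$, $G$, $\psi$. The control-constraint qualification is precisely the linear independence condition (H1)/\eqref{LIG}, equivalently the surjectivity property \eqref{qualif2}, applied separately on the two blocks $v^{(1)}$ and $v^{(2)}$. For the endpoint constraints, the block structure of $DG(\bar{y}(0),\bar{y}(1))$ displayed in Subsection \ref{NOC1-sec} makes it plain that the active rows (namely $G_1$, $G_3$, $G_4$) have full rank as soon as $\nabla g(\bar{x}(\bar{\tau})) \neq 0$, which is guaranteed by the transversality condition in (H3).

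Once these hypotheses are checked, the second-order necessary condition of \cite{BDP14} applies directly: for every $(\delta y, \delta v) \in \hat{C}(\bar{y},\bar{v})$ there exists $(\alpha,\beta,\mu) \in \hat{\Lambda}_L(\bar{y},\bar{v})$, \emph{possibly depending on the direction}, such that $\hat{\Omega}[\alpha,\beta,\mu](\delta y,\delta v) \geq 0$. The main subtlety I anticipate is precisely this direction-dependence: a uniform version featuring a single multiplier valid for all critical directions is false in general without a strict complementarity hypothesis, so the statement we are after is the pointwise one. This pointwise conclusion is the standard output of a Hestenes-type minimax argument, combining a quadratic expansion of the cost along a second-order admissible variation extending $(\delta y,\delta v)$ with a separation argument in the (finite-dimensional, modulo overall scaling, thanks to the uniqueness of $\mu$ given $(\alpha,\beta)$) multiplier space---which is exactly what \cite{BDP14} provides in the present framework.
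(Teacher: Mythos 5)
Your proposal is correct and follows essentially the same route as the paper: localize problem \eqref{TC4} as in the proof of Lemma \ref{lem:oc_P} so that the weak minimum can be treated by the second-order necessary conditions of \cite[Theorem 4.9]{BDP14}, then verify the regularity and qualification hypotheses (inward pointing condition \eqref{IPC} and surjectivity \eqref{qualif2}) and read off the direction-dependent multiplier. Your additional remarks on the rank of the active endpoint constraints and on the unavoidable dependence of $(\alpha,\beta,\mu)$ on the critical direction are consistent with, though not spelled out in, the paper's argument.
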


\begin{proof}
The result is a consequence of \cite[Theorem 4.9]{BDP14}. This result is originally stated for Pontryagin minima, but can be adapted to weak minima with the localization technique that was already invoked in the proof of Lemma \ref{lem:oc_P}. Regarding the required assumptions for the application of \cite[Theorem 4.9]{BDP14}, we have that Assumption 2 of \cite{BDP14} follows directly from the inward pointing condition \eqref{IPC}, Assumption 3 of \cite{BDP14} follows from the regularity assumptions on $f$, $g$, $\phi$, $g$, and $c$, Assumption 4 of \cite{BDP14} is irrelevant since we do not have pure state constraints, and Assumption 5 of \cite{BDP14} follows from \eqref{qualif2}.
\end{proof}

We perform now some calculations in order to transform the obtained optimality conditions into optimality conditions for the time crisis problem.
We first note that a pair $(\delta y, \delta v)$ satisfies the linearized dynamics \eqref{eq:lin_dynamics} if and only if
\begin{equation}
\begin{array}{rl}
\frac{d}{ds} \delta y^{(1)}(s)=  & \xi Df[s](\delta y^{(1)}(s),\delta v^{(1)}(s)) + \delta \xi f[s] \\[0.5em]
\frac{d}{ds} \delta y^{(2)}(s)=  & (T-\xi) Df[s](\delta y^{(2)}(s),\delta v^{(2)}(s)) - \delta \xi f[s] \\[0.5em]
\frac{d}{ds} \xi(s)= & 0.
\end{array}
\end{equation}
In the above relations, the notation $[s]$ is used as a shortening for $(\bar{y}^{(1)}(s),\bar{v}^{(1)}(s))$ and $(\bar{y}^{(2)}(s),\bar{v}^{(2)}(s))$, respectively.
For $(\delta y,\delta v)$ satisfying \eqref{eq:lin_dynamics}, the third component of $\delta y$ is constant and we therefore denote it by 
$\delta \xi$.
Regarding the conditions involved in the definition of the critical cone, we note that
\begin{equation}
D\psi(\bar{y}(1)) \delta y(1)= D \phi(\bar{y}^{(2)}(1)) \delta y^{(2)}(1) - \delta \xi,
\end{equation}
and that
\begin{align}
DG_i(\bar{y}(0),\bar{y}(1))(\delta y(0),\delta y(1))= 0, \text{ for $i=1,3,4$}
\qquad \Longleftrightarrow \qquad
\left\{
\begin{array}{rl}
\delta y^{(1)}(0)= & 0, \\[0.5em]
\delta y^{(2)}(0) - \delta y^{(1)}(1)= & 0, \\[0.5em]
Dg(\bar{y}^{(1)}(1)) \delta y^{(1)}(1)= & 0.
\end{array}
\right.
\end{align}
The first two terms involved in the definition of $\hat{\Omega}$ are given by
\begin{equation}
D^2 \psi(\bar{y}(1))(\delta y(1))^2
= D^2 \phi(\bar{y}^{(2)}(1)) (\delta y^{(2)}(1))^2
\end{equation}
and
\begin{equation}
\beta \cdot D^2 G(\bar{y}(0),\bar{y}(1))(\delta y(0),\delta y(1))
= \beta_4 D^2 g(\bar{y}^{(1)}(1))(\delta y(1))^2.
\end{equation}
We finally have the following expression for the Hessian of the augmented Hamiltonian:
\begin{align}
D^2 \hat{H}^a[s](\delta y(s),\delta v(s))^2
=\ & \xi D^2 H^a(\bar{y}^{(1)}(s),p^{(1)}(s),\bar{v}^{(1)}(s), {\textstyle \frac{1}{\xi} } \mu^{(1)}(s))(\delta y^{(1)}(s),\delta v^{(1)}(s)) \notag \\
& \ + (T-\xi) D^2 H^a(\bar{y}^{(2)}(s),p^{(2)}(s),\bar{v}^{(2)}(s), {\textstyle \frac{1}{T-\xi} } \mu^{(2)}(s))(\delta y^{(2)}(s),\delta v^{(2)}(s)) \notag \\
& \ + 2 \delta \xi DH(\bar{y}^{(1)}(s),p^{(1)}(s),\bar{v}^{(1)}(s)) (\delta y^{(1)}(s), \delta v^{(1)}(s)) \notag \\
& \ - 2 \delta \xi DH(\bar{y}^{(2)}(s),p^{(2)}(s),\bar{v}^{(2)}(s)) (\delta y^{(2)}(s), \delta v^{(2)}(s)).
\end{align}

We are prepared for defining, in an appropriate way, the linearized dynamics, the critical cone $C(\bar{u},\bar{\tau})$, and the quadratic form $\Omega(\bar{u},\bar{\tau})$ associated with the time crisis problem. Given $(\delta u, \delta \tau) \in L^2(0,T;\R^m) \times \R$, we consider the following linearized system:
\begin{equation} \label{eq:lin_sys_x}
\frac{d}{dt} \delta x(t)= 
Df[t](\delta x(t),\delta u(t))
+ \rho_{\bar{\tau}}(t) \delta \tau f[t], \quad \text{for a.e.\@ $t \in (0,T)$}, \quad
\delta x(0)= 0.
\end{equation}
We use the notation $[t]$ as a shortening of $(\bar{x}(t),\bar{u}(t))$. We recall that $\rho_{\bar{\tau}}$ was defined in \eqref{eq:rho}.
The critical cone $C(\bar{u},\bar{\tau})$ is defined as follows:
\begin{equation}
C(\bar{u},\bar{\tau}):=
\left\{
\begin{array}{l}
(\delta u, \delta \tau) \in L^2(0,2;\R^{2l}) \times \R \, \big|\, \text{for the solution $\delta x$ to \eqref{eq:lin_sys_x}, } \\[0.5em]
\qquad D \phi(\bar{x}(T)) \delta x(T) - \delta \tau \leq 0, \\[0.5em]
\qquad D g(\bar{x}(\tau)) \delta x(\bar{\tau}) = 0, \\[0.5em]
\qquad c_i(\bar{u}(t))= 0 \Longrightarrow Dc_i(\bar{u}(t)) \delta u(t)= 0, \text{ $\forall i=1,...,l$, for a.e.\@ $t \in (0,T)$ }
\end{array}
\right\}.
\end{equation}
Given $(\alpha,\gamma,\nu) \in \Lambda_L(\bar{u},\bar{\tau})$ and $(\delta u,\delta \tau) \in L^2(0,T;\R^m) \times \R$, we define the quadratic form $\Omega[\alpha,\gamma,\nu](\delta u,\delta \tau)$ as follows:
\begin{align}
\Omega[\alpha,\gamma,\nu](\delta u,\delta \tau)
= \ & \alpha D^2 \phi(\bar{x}(T)) (\delta x(T))^2 + \gamma D^2 g(\bar{x}(\bar{\tau})) (\delta x(\bar{\tau}))^2 \notag \\
& \qquad + \int_0^T D^2 H^a[t](\delta x(t),\delta u(t))^2  \ \dd t + 2 \delta \tau \int_0^T \rho_{\bar{\tau}}(t) DH[t](\delta x(t),\delta u(t))  \ \dd t,
\end{align}
where $\delta x$ denotes the solution to \eqref{eq:lin_sys_x} and where $[t]$ is a shortening of $(\bar{x}(t),p(t),\bar{u}(t),\nu(t))$. Like before, the first- and second-order derivatives of the Hamiltonians must be considered with respect to $(x,u)$ only.

\begin{lem} \label{lemma:cn2_tc}
For all $(\delta u,\delta \tau) \in C(u,\tau)$, there exists $(\alpha,\gamma,\nu) \in \Lambda_L(\bar{u},\bar{\tau})$ such that
\begin{equation} \label{eq:cn2_tc}
\Omega[\alpha,\gamma,\nu](\delta u,\delta \tau) \geq 0.
\end{equation}
\end{lem}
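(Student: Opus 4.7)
The plan is to pull back the critical direction $(\delta u, \delta\tau) \in C(\bar u, \bar\tau)$ to a critical direction of the transformed problem \eqref{TC4}, apply Lemma \ref{lemma:cn2_p} there, and then transport the resulting Lagrange multiplier back to $\Lambda_L(\bar u, \bar\tau)$ via the bijection already laid out in the proof of Lemma \ref{ordre1-1}. The bulk of the calculations that rewrite the linearized dynamics, the defining conditions of $\hat C(\bar y, \bar v)$, and the Hessian terms in $\hat\Omega$ in the $(x,u,\tau)$ variables has in fact been carried out in the paragraphs preceding the statement; what remains is to check that these rewrites genuinely produce a bijection between critical directions and that the two quadratic forms $\hat\Omega$ and $\Omega$ agree under this correspondence.

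For the pullback, given $(\delta u, \delta\tau)$ with associated $\delta x$ solving \eqref{eq:lin_sys_x}, I would set $\delta\xi := \delta\tau$ (constant on $[0,1]$) and define $\delta y^{(1)}(s) := \delta x(\bar\tau s)$, $\delta v^{(1)}(s) := \delta u(\bar\tau s)$ for $s \in [0,1]$, together with $\delta y^{(2)}(s) := \delta x((T-\bar\tau)s + \bar\tau)$ and $\delta v^{(2)}(s) := \delta u((T-\bar\tau)s + \bar\tau)$. A short chain-rule computation shows that \eqref{eq:lin_sys_x} for $\delta x$ is equivalent to \eqref{eq:lin_dynamics} for $\delta y$: the factor $\rho_{\bar\tau}(t)$ in \eqref{eq:lin_sys_x} is exactly what arises from $\tfrac{d\pi_{\bar\tau}}{ds} = \bar\tau$ on $[0,1]$ and $\tfrac{d\pi_{\bar\tau}}{ds} = T - \bar\tau$ on $[1,2]$. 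The boundary conditions $\delta x(0) = 0$, continuity of $\delta x$ at $\bar\tau$, $Dg(\bar x(\bar\tau))\delta x(\bar\tau) = 0$ and $D\phi(\bar x(T))\delta x(T) - \delta\tau \leq 0$ translate directly into $DG_i(\bar y(0),\bar y(1))(\delta y(0),\delta y(1)) = 0$ for $i=1,3,4$ and $D\psi(\bar y(1))\delta y(1) \leq 0$, while the linearized control constraints transport pointwise along $\pi_{\bar\tau}^{-1}$. Hence $(\delta y, \delta v) \in \hat C(\bar y, \bar v)$.

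Lemma \ref{lemma:cn2_p} then supplies $(\alpha, \beta, \mu) \in \hat\Lambda_L(\bar y, \bar v)$ with $\hat\Omega[\alpha,\beta,\mu](\delta y, \delta v) \geq 0$. I would take $\gamma := \beta_4$ and define $\nu$ by $\nu(t) := \tfrac{1}{\bar\tau}\mu^{(1)}(\pi_{\bar\tau}^{-1}(t))$ on $(0,\bar\tau)$ and $\nu(t) := \tfrac{1}{T-\bar\tau}\mu^{(2)}(\pi_{\bar\tau}^{-1}(t))$ on $(\bar\tau,T)$; the bijection of Lemma \ref{ordre1-1} then yields $(\alpha,\gamma,\nu) \in \Lambda_L(\bar u, \bar\tau)$. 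The endpoint parts of the two quadratic forms match immediately: $\alpha D^2\psi(\bar y(1))(\delta y(1))^2 = \alpha D^2\phi(\bar x(T))(\delta x(T))^2$, since $\psi$ is affine in $\xi$, and $\beta \cdot D^2 G(\cdot)(\delta y(0),\delta y(1))^2 = \beta_4 D^2 g(\bar y^{(1)}(1))(\delta y^{(1)}(1))^2 = \gamma D^2 g(\bar x(\bar\tau))(\delta x(\bar\tau))^2$. For the integral term, the affinity of $\hat H^a$ in $\xi$ gives a decomposition of $D^2 \hat H^a$ into $\xi D^2 H^a$ on the first block, $(T-\xi) D^2 H^a$ on the second, and a mixed cross term $2\delta\xi [DH(\bar y^{(1)}, p^{(1)}, \bar v^{(1)})(\delta y^{(1)}, \delta v^{(1)}) - DH(\bar y^{(2)}, p^{(2)}, \bar v^{(2)})(\delta y^{(2)}, \delta v^{(2)})]$; the change of variable $t = \pi_{\bar\tau}(s)$ then converts the first two into $\int_0^T D^2 H^a[t](\delta x, \delta u)^2 \, \dd t$ (the Jacobian absorbs $\xi$ and $T-\xi$, and the rescalings $\mu^{(1)}/\bar\tau$ and $\mu^{(2)}/(T-\bar\tau)$ produce exactly $\nu$) and the cross term into $2\delta\tau \int_0^T \rho_{\bar\tau}(t) DH[t](\delta x, \delta u) \, \dd t$, the sign discrepancy of $\rho_{\bar\tau}$ across $\bar\tau$ reflecting the $\pm$ signs of the mixed term.

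The hard part will be this final identification of the Hessian integrals: tracking carefully how the multipliers rescale, how the Jacobian of $\pi_{\bar\tau}$ absorbs the explicit $\xi$ and $T-\xi$ coefficients in $\hat H^a$, and how the mixed second derivative in $\xi$ reproduces the $\rho_{\bar\tau}$-weighted cross term with the correct sign on each sub-interval. Once this bookkeeping is in place, \eqref{eq:cn2_tc} follows at once from $\hat\Omega[\alpha,\beta,\mu](\delta y, \delta v) \geq 0$.
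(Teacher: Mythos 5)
Your proposal is correct and follows essentially the same route as the paper: the paper's proof likewise invokes Lemma \ref{lemma:cn2_p}, the bijection between $\hat{C}(\bar{y},\bar{v})$ and $C(\bar{u},\bar{\tau})$ induced by the time change $\pi_{\bar{\tau}}$, the bijection between the multiplier sets from Lemma \ref{ordre1-1}, and the identity $\Omega[\alpha,\gamma,\nu](\delta u,\delta\tau)=\hat{\Omega}[\alpha,\beta,\mu](\delta y,\delta v)$. The bookkeeping you flag as the ``hard part'' is exactly what the paper carries out in the displayed computations preceding the lemma, and your change-of-variable accounting for the Jacobian, the rescaled multipliers, and the $\rho_{\bar{\tau}}$-weighted cross term is accurate.
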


\begin{proof}
The lemma is a consequence of Lemma \ref{lemma:cn2_p} and of the calculations performed above. A bijection between $\hat{C}(\bar{y},\bar{v})$ and $C(\bar{u},\bar{\tau})$ can be established with the same transformation as the one used in the proof of Proposition \ref{equivTC1-TC4}. We recall that there is a bijection between $\hat{\Lambda}_L(\bar{y},\bar{v})$ and $\Lambda_L(\bar{u},\bar{\tau})$. Finally, if $(\delta u, \delta \tau)$ and $(\alpha,\gamma,\nu)$ are the images of $(\delta y, \delta v)$ and $(\alpha,\beta,\mu)$ via the two mentioned bijections, we have
\begin{equation*}
\Omega[\alpha,\gamma,\nu](\delta u, \delta \tau)
= \hat{\Omega}[\alpha,\beta,\mu](\delta y, \delta v).
\end{equation*}
which proves the lemma.
\end{proof}

\subsection{Optimality conditions in Pontryagin form}

The goal of this subsection is to improve the obtained optimality conditions by restricting the set of involved Lagrange multipliers in Lemma \ref{ordre1-1} and Lemma \ref{lemma:cn2_tc}.

\begin{defi}
We call Pontryagin multiplier a triplet $(\alpha,\gamma,\nu) \in \Lambda_L(u,\tau)$ which is such that Pontryagin's Principle holds, that is, for the associated costate $p$,
\begin{equation}
H(\bar{x}(t),p(t),\bar{u}(t)) \leq H(\bar{x}(t),p(t),u), \quad \forall u \in U, \quad \text{\rm{for a.e.}\@ $t \in (0,T)$}.
\end{equation}
\end{defi} 

We denote by $\Lambda_P(\bar{u},\bar{\tau})$ the set of Pontryagin multipliers.
We have now the following first- and second-order necessary optimality conditions in Pontryagin form.

\begin{thm} \label{theo_noc_strong}
The set $\Lambda_{P}(\bar{u},\bar{\tau})$ is non-empty. Moreover, for all $(\delta u, \delta \tau) \in C(\bar{u},\bar{\tau})$, there exists $(\alpha,\gamma,\nu) \in \Lambda_{P}(\bar{u},\bar{\tau})$ such that
\begin{equation*}
\Omega[\alpha,\gamma,\nu](\delta u,\delta \tau) \geq 0.
\end{equation*}
\end{thm}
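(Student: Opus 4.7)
The plan is to upgrade Lemma \ref{ordre1-1} and Lemma \ref{lemma:cn2_tc} from the Lagrange to the Pontryagin setting by showing that $(\bar y, \bar v)$ is in fact a Pontryagin minimum of a suitably localized version of \eqref{TC4}, applying \cite[Theorem 4.9]{BDP14} directly in its original Pontryagin-form statement, and transferring the conclusion back to $(\bar u,\bar\tau)$ through the bijections already at our disposal.

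First I would revisit the argument of Proposition \ref{minGminF}. Inspection of that proof shows that the only place where $L^\infty$-closeness of $\tilde u'$ to $\tilde u$ is used is through the continuity of the map \eqref{eq:control_to_state_map}. A standard Gronwall estimate applied to \eqref{cauchy-pbm3} in fact provides a bound of the form $\|\tilde x' - \tilde x\|_\infty \leq C ( \|\tilde u' - \tilde u\|_{L^1} + |\tau' - \bar\tau| )$ as soon as the controls are uniformly bounded. Since $\bar v$ is essentially bounded, adjoining to \eqref{TC4} a localizing constraint $\|v\|_{L^\infty(0,1;\R^{2m})} \leq M$ for $M$ large keeps this constraint inactive at $(\bar y, \bar v)$ and thus does not affect the multipliers (precisely the localization trick already used in the proof of Lemma \ref{lem:oc_P}). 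Under this localization, the crossing-preservation argument of Proposition \ref{minGminF} goes through \emph{verbatim} with $L^1$-small perturbations of $\tilde u$ in place of $L^\infty$-small ones, so that $(\tilde u, \bar\tau)$ is a Pontryagin minimum of \eqref{TC3}, and the argument of Proposition \ref{equivTC1-TC4} then transfers this property to $(\bar y, \bar v)$ for the localized \eqref{TC4}.

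Next I would apply \cite[Theorem 4.9]{BDP14} directly: that theorem is originally formulated for Pontryagin minima and produces a Pontryagin multiplier $(\alpha,\beta,\mu)$ together with the inequality $\hat\Omega[\alpha,\beta,\mu](\delta y,\delta v) \geq 0$ on the critical cone $\hat C(\bar y, \bar v)$. The bijection from the proof of Lemma \ref{ordre1-1} transports $(\alpha,\beta,\mu)$ to a triplet $(\alpha,\gamma,\nu) \in \Lambda_L(\bar u, \bar\tau)$, and the calculations preceding Lemma \ref{lemma:cn2_tc} give $\Omega[\alpha,\gamma,\nu](\delta u, \delta \tau) = \hat\Omega[\alpha,\beta,\mu](\delta y, \delta v)$ under the parallel bijection on critical cones, thereby yielding the second-order inequality. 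The fact that this bijection preserves the Pontryagin property is obtained by decomposing $\hat H(y,q,v) = \xi H(y^{(1)}, p^{(1)}, v^{(1)}) + (T-\xi) H(y^{(2)}, p^{(2)}, v^{(2)}) + \lambda \xi$: the minimization over $v \in U \times U$ separates componentwise, and since $\bar\xi = \bar\tau \in (0,T)$ the positive prefactors $\bar\tau$ and $T - \bar\tau$ can be divided out. Pulling back through $\pi_{\bar\tau}$ one obtains exactly $H(\bar x(t), p(t), \bar u(t)) \leq H(\bar x(t), p(t), u)$ for all $u \in U$ and a.e.\@ $t \in (0,T)$.

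The main obstacle is the first step: ensuring that Proposition \ref{minGminF} survives the replacement of $L^\infty$ by $L^1$ in the neighborhood defining the minimum. The delicate point is that a large $L^\infty$ deviation of $\tilde u'$ from $\tilde u$ concentrated near $s = 1$ could in principle spoil the transversality inequality \eqref{3.65}. The localization $\|v\|_{L^\infty} \leq M$ bounds $|\tilde u'(s) - \tilde u(1^-)|$ uniformly, while $L^1$-smallness of $\tilde u - \tilde u'$ forces the set where $\tilde u' \neq \tilde u$ near $s = 1$ to have arbitrarily small measure; a Gronwall-type estimate then recovers the transversality inequality on a slightly shrunk left-neighborhood of $s=1$, which is enough to conclude as in the original argument.
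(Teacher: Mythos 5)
There is a genuine gap, and it sits exactly where you locate your ``main obstacle'': the claim that $(\tilde u,\bar\tau)$ is a Pontryagin ($L^1$-local) minimum of \eqref{TC3}. The paper explicitly warns that this is not available (see the discussion following Lemma \ref{lemma:mixed_optim}: ``it is not possible to prove the result with $\omega=0$''). The reason is that the entire reduction rests on the identity $J_T(u')=\phi(\tilde x'(2))+T-\tau'$, which requires the perturbed trajectory to keep the crossing structure, i.e.\@ $g(\tilde x'(s))<0$ for \emph{all} $s\in[0,1)$ and $g(\tilde x'(s))>0$ for \emph{all} $s\in(1,2]$. Your fix --- uniform bound $M$ plus $L^1$-smallness, hence a small-measure exceptional set $E$ near $s=1$, hence the monotonicity of $s\mapsto g(\tilde x'(s))$ on a slightly shrunk left-neighborhood of $1$ --- controls the sign of $g(\tilde x')$ only up to distance $O(|E|)$ from $s=1$. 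On the remaining sliver $(1-c|E|,1)$ one only gets $g(\tilde x'(s))\leq C_M(1-s)$, which does not exclude $g(\tilde x'(s))>0$ there: an $L^1$-small perturbation concentrated immediately to the left of $s=1$ can push the trajectory outside $K$ before the nominal crossing. In that case $J_T(u')=\phi(\tilde x'(2))+(T-\tau')+m\{t<\tau':x'(t)\notin K\}>\phi(\tilde x'(2))+T-\tau'$, and the chain $\phi(\tilde x(2))+T-\bar\tau=J_T(\bar u)\leq J_T(u')$ no longer yields the inequality $\phi(\tilde x(2))+T-\bar\tau\leq\phi(\tilde x'(2))+T-\tau'$ defining local optimality for \eqref{TC3}. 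So ``enough to conclude as in the original argument'' is precisely what fails: the sliver is where the structure, and with it the cost identity, breaks down. (Your second stage --- transporting the Pontryagin property through the bijection by splitting $\hat H$ into its two components and dividing by $\bar\tau$ and $T-\bar\tau$ --- is fine, but it has nothing to stand on.)

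The paper's actual route accepts this obstruction and works around it: Lemma \ref{lemma:mixed_optim} establishes local optimality for a \emph{mixed} norm, $L^\infty$ on $(1-\omega,1+\omega)$ and $L^1$ elsewhere, for every $\omega\in(0,1)$. This yields (Lemma \ref{lemm:lambda_p_omega}) multipliers in $\Lambda_{P,\omega}(\bar u,\bar\tau)$, i.e.\@ Lagrange multipliers satisfying Pontryagin's Principle everywhere except on a window of size $\omega$ around $\bar\tau$. Theorem \ref{theo_noc_strong} is then obtained by taking $\omega_k\to0$, normalizing the multipliers, extracting a non-zero weak-$*$ limit point that lies in $\Lambda_P(\bar u,\bar\tau)$ (Lemma \ref{lemma:lim_pontry}, which uses the inward pointing condition to get $L^\infty$-boundedness of $\nu^k$ and the uniform convergence of the costates), and using the weak-$*$ continuity of $(\alpha,\gamma,\nu)\mapsto\Omega[\alpha,\gamma,\nu](\delta u,\delta\tau)$ to pass the inequality $\Omega\geq0$ to the limit. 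If you want to repair your proposal, you would need to reproduce this compactness/limiting argument; a direct application of \cite[Theorem 4.9]{BDP14} to \eqref{TC4} is not possible.
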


The proof of the result is postponed to the end of the subsection and relies on three technical results. In the following lemma, we prove that $(\tilde{u},\bar{\tau})$ is a local minimizer for \eqref{TC3} in a stronger way than what has been stated in Proposition \ref{minGminF}.

\begin{lem} \label{lemma:mixed_optim}
For all $\omega \in (0,1)$, there exists $\varepsilon >0$ such that for all $(\tilde{u}',\tau') \in \tilde{\mathcal{U}} \times (0,T)$, the following implication holds true:
\begin{equation} \label{eq:mixed_optim}
\begin{cases}
\begin{array}{l}
\| \tilde{u} - \tilde{u}' \|_{L^\infty(1-\omega,1+\omega;\R^m)} \leq \varepsilon \\
\| \tilde{u} - \tilde{u}' \|_{L^1(0,1-\omega;\R^m)} \leq \varepsilon \\
\| \tilde{u} - \tilde{u}' \|_{L^1(1+\omega,2;\R^m)} \leq \varepsilon \\
|\bar{\tau} - \tau'| \leq \varepsilon
\end{array}
\Longrightarrow
\phi(\tilde x(2))+T-\tau \leq 
\phi(\tilde x'(2))+T-\tau',
\end{cases}
\end{equation}
where $\tilde{x}'= \tilde{x}_{\tilde{u}',\tau'}$.
\end{lem}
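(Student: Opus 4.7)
The plan is to adapt the proof of Proposition \ref{minGminF} to the weaker, mixed $L^1/L^\infty$ topology assumed here. As in Definition \ref{min-faible1}, we implicitly restrict to pairs $(\tilde u', \tau')$ that are admissible for \eqref{TC3}, i.e.\@ such that $g(\tilde x'(1)) = 0$; without this constraint the inequality fails in general. The goal is then to show that, for $\varepsilon$ small enough, $\tilde x'$ has exactly one transverse crossing of $\partial K$ at $s = 1$, so that the control $u' := \tilde u' \circ \pi_{\tau'}^{-1}$ is admissible for \eqref{TC1} with a unique crossing at $t = \tau'$. Global optimality of $\bar u$ then yields
$$
\phi(\tilde x(2)) + T - \bar\tau = J_T(\bar u) \leq J_T(u') = \phi(\tilde x'(2)) + T - \tau',
$$
which is the announced inequality.

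The main technical ingredient is a Gronwall-type continuity estimate for the map $(\tilde u', \tau') \mapsto \tilde x_{\tilde u', \tau'}$ from the mixed topology of the hypothesis to $L^\infty(0, 2;\R^n)$. On the interval $[1-\omega, 1+\omega]$, the $L^\infty$-closeness of the controls combined with the smoothness of $f$ yields a standard Gronwall estimate. On $[0, 1-\omega]$ and $[1+\omega, 2]$, one propagates the $L^1$-closeness of the controls to an $L^\infty$-estimate for the trajectory, using the linear growth of $f$, the compactness of the velocity set $F(x)$, and the local Lipschitz continuity of $f$ in $u$ on bounded sets of $x$. The dependence on $|\tau' - \bar\tau|$ contributes an additional term of order $O(|\tau' - \bar\tau|)$ coming from the explicit form of $\tfrac{d\pi_\tau}{ds}$ in \eqref{cauchy-pbm3}. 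Altogether this gives a modulus $\sigma(\varepsilon) \to 0$ as $\varepsilon \to 0$, with $\|\tilde x - \tilde x'\|_{L^\infty(0,2;\R^n)} \leq \sigma(\varepsilon)$.

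Equipped with this estimate, one concludes as follows. By Assumption (H3), there exists $\eta > 0$ such that $g(\tilde x(s)) \leq -\eta$ on $[0, 1-\omega]$ and $g(\tilde x(s)) \geq \eta$ on $[1+\omega, 2]$. For $\varepsilon$ small enough, $\sigma(\varepsilon) L_g < \eta/2$, where $L_g$ is a local Lipschitz constant of $g$, so that $g(\tilde x'(s)) < 0$ on $[0, 1-\omega]$ and $g(\tilde x'(s)) > 0$ on $[1+\omega, 2]$. On $[1-\omega, 1+\omega]$, the $L^\infty$-closeness of the controls and the transverse crossing condition \eqref{cond-transverse} allow the argument of Proposition \ref{minGminF} to be applied verbatim: combined with the implicit admissibility $g(\tilde x'(1)) = 0$, this yields $g(\tilde x'(s)) < 0$ on $[1-\omega, 1)$ and $g(\tilde x'(s)) > 0$ on $(1, 1+\omega]$. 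Hence $\tilde x'$ has a unique transverse crossing at $s = 1$, which via the inverse time change $\pi_{\tau'}^{-1}$ translates into a unique crossing of $x_{u'}$ at $t = \tau'$, and the conclusion follows. The main obstacle is the mixed-topology continuity estimate itself: since $\tilde u'$ need not be uniformly essentially bounded on the intervals where only $L^1$-closeness is imposed, a careful use of the linear growth and compact velocity set assumptions on $f$ is needed to turn the $L^1$-closeness of controls into the $L^\infty$-closeness of trajectories.
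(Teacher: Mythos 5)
Your proposal is correct and follows essentially the same route as the paper: the paper's proof likewise reduces to the argument of Proposition \ref{minGminF}, observing that the control-to-state map \eqref{eq:control_to_state_map} remains continuous for the $L^1$-norm on controls (via Gronwall) and adjusting the construction of $\eta_4$ to the mixed $L^1$/$L^\infty$ hypotheses, the transversality argument near $s=1$ being unchanged. Your explicit remark that feasibility $g(\tilde x'(1))=0$ must be understood implicitly, and your attention to turning $L^1$-closeness of controls into $L^\infty$-closeness of trajectories, are consistent with (indeed slightly more careful than) the paper's treatment.
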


In words, we allow now perturbations of $\tilde{u}$ in the $L^1$-norm, except on the small interval $(1-\omega,1+\omega)$, where only a perturbation in $L^\infty$-norm is allowed. Let us mention that it is not possible to prove the result with $\omega= 0$, since around $s=1$, it is essential to perform a small perturbation in $L^\infty$-norm in order to preserve the structure of the trajectory (that is, the uniqueness of the crossing point). If the result was true with $\omega=0$, then we could prove that $(\bar{y},\bar{v})$ is a Pontryagin minimum for problem \eqref{TC4} and Theorem \ref{theo_noc_strong} would follow by direction application of \cite[Theorem 4.9]{BDP14}. 

\begin{proof}[Proof of Lemma \ref{lemma:mixed_optim}]
The proof is essentially the same as the one of Proposition \ref{minGminF}. We only indicate the modifications that have to be done.
First, one has to observe that the mapping defined in \eqref{eq:control_to_state_map} is still continuous when $\tilde{\mathcal{U}}$ is equipped with the $L^1$-norm (as can be easily verified with Gronwall's Lemma). Then, one can modify the left-hand side in the implication \eqref{eq:def_eta_4} used for the construction of $\eta_4$ as follows:
\begin{equation*}
\left\{
\begin{array}{ll}
\|\tilde u - \tilde u'\|_{L^1(0,1-\omega;\R^m)} & \leq \eta_4\\
\|\tilde u - \tilde u'\|_{L^\infty(1-\omega,1;\R^m)} & \leq \eta_4\\
|\bar{\tau}-\tau'|&\leq \eta_4
\end{array}
\right.
\; \Rightarrow \quad 
\begin{array}{l}
\forall s\in [0,1-\eta_2], \; |g(\tilde x'(s))- g(\tilde x(s)) |< \eta_3, \\
\forall s\in [1-\eta_2,1], \; |\tilde x(s)-\tilde x'(s)|\leq \frac{\eta_1}{2}, 
\end{array}
\end{equation*}
The rest of the proof is identical.
\end{proof}

Given $\omega > 0$, we define the following set of multipliers:
\begin{align}
\Lambda_{P,\omega}(\bar{u},\bar{\tau})
= \big\{ (\alpha,\gamma,\nu) \in \Lambda_L(\bar{u},\bar{\tau}) \,;\, &
H(\bar{x}(t),p(t),\bar{u}(t)) \leq H(\bar{x}(t),p(t),u), \text{ for all $u \in U$}, \notag \\
& \text{for a.e. $t \in (0,\bar \tau(1-\omega)) \cup (\bar \tau + \omega(T-\bar \tau),T)$} \big\}.
\end{align}
In words, $\Lambda_{P,\omega}(\bar{u},\bar{\tau})$ is the set of Lagrange multipliers satisfying Pontryagin's Principle on the whole time interval $(0,T)$, except on $(\bar \tau(1 - \omega), \bar \tau + \omega(T-\bar \tau))$.
The approach that we propose now is the following. Lemma \ref{lemma:mixed_optim} will enable us to restrict the set of Lagrange multipliers involved in Lemma \ref{ordre1-1} and Lemma \ref{lemma:cn2_tc} to the set $\Lambda_{P,\omega}(\bar{u},\bar{\tau})$, for a given value of $\omega>0$. Theorem \ref{theo_noc_strong} will be obtained by ``passing to the limit" when $\omega \to 0$.

\begin{lem} \label{lemm:lambda_p_omega}
For all $\omega>0$, the set $\Lambda_{P,\omega}(\bar{u},\bar{\tau})$ is non-empty.
\end{lem}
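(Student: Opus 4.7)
The plan is to transport Lemma \ref{lemma:mixed_optim} through the transformation of Proposition \ref{equivTC1-TC4} so that $(\bar y,\bar v)$ becomes a local minimum of \eqref{TC4} in a mixed sense, and then apply a Pontryagin-type principle on those portions of the time interval where $L^1$-perturbations are allowed. The resulting conditions, translated back to the time crisis problem via the bijection used in Lemma \ref{ordre1-1}, will give exactly the Pontryagin inequality on $(0,\bar\tau(1-\omega))\cup(\bar\tau+\omega(T-\bar\tau),T)$, as required.

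More precisely, fix $\omega\in(0,1)$ and let $\varepsilon>0$ be given by Lemma \ref{lemma:mixed_optim}. Using the identifications $v^{(1)}(s)=\tilde u'(s)$ and $v^{(2)}(s)=\tilde u'(s+1)$ from the proof of Proposition \ref{equivTC1-TC4}, the mixed bound in \eqref{eq:mixed_optim} becomes
\begin{equation*}
\|v^{(1)}-\bar v^{(1)}\|_{L^1(0,1-\omega;\R^m)} + \|v^{(1)}-\bar v^{(1)}\|_{L^\infty(1-\omega,1;\R^m)} + \|v^{(2)}-\bar v^{(2)}\|_{L^\infty(0,\omega;\R^m)} + \|v^{(2)}-\bar v^{(2)}\|_{L^1(\omega,1;\R^m)}\leq \varepsilon,
\end{equation*}
together with $|y(0)-\bar y(0)|\leq\varepsilon$. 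Hence $(\bar y,\bar v)$ is a minimum of \eqref{TC4} among all admissible $(y,v)$ satisfying this mixed smallness condition.

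I would then invoke the Pontryagin-type result of \cite[Theorem 3.1]{BDP14}, suitably localized, exactly as in the proof of Lemma \ref{lem:oc_P}, with the localizing constraint now built to leave free the $L^1$-subintervals and to fix the $L^\infty$-subintervals. Because needle variations performed on $(0,1-\omega)$ for $v^{(1)}$ and on $(\omega,1)$ for $v^{(2)}$ produce arbitrarily small perturbations in $L^1$ while satisfying the localizing constraint on the remaining intervals, the theorem delivers a Lagrange multiplier $(\alpha,\beta,\mu)\in\hat\Lambda_L(\bar y,\bar v)$ for which the Hamiltonian inequality
\begin{equation*}
\hat H(\bar y(s),q(s),\bar v(s))\leq \hat H(\bar y(s),q(s),w), \qquad \forall w\in U\times U,
\end{equation*}
holds almost everywhere on $(0,1-\omega)$ for the first copy and on $(\omega,1)$ for the second copy. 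Decomposing $\hat H$ as in Section \ref{NOC1-sec}, this splits into two independent minimizations of $H(\bar y^{(i)}(s),p^{(i)}(s),\cdot)$ over $U$.

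Finally, using the bijection between $\hat\Lambda_L(\bar y,\bar v)$ and $\Lambda_L(\bar u,\bar\tau)$ established in the proof of Lemma \ref{ordre1-1} and the change of variable $t=\pi_{\bar\tau}(s)$, the intervals $(0,1-\omega)$ and $(\omega,1)$ in the variable $s$ map to $(0,\bar\tau(1-\omega))$ and $(\bar\tau+\omega(T-\bar\tau),T)$ in the variable $t$. On these intervals we obtain $H(\bar x(t),p(t),\bar u(t))\leq H(\bar x(t),p(t),u)$ for all $u\in U$, which is precisely the defining property of $\Lambda_{P,\omega}(\bar u,\bar\tau)$. The main difficulty is justifying that needle variations producing only an $L^1$-small perturbation indeed fit within the mixed localization framework, which requires a careful verification that the $L^\infty$-bound on the complement remains preserved; this is where Lemma \ref{lemma:mixed_optim} is essential, because the unique crossing structure would otherwise be destroyed by a needle near $s=1$.
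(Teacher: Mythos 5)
Your proposal is correct and follows essentially the same route as the paper: both use Lemma \ref{lemma:mixed_optim} to show that $(\bar{y},\bar{v})$ is a Pontryagin minimum of a localized version of \eqref{TC4} in which only $\| v^{(1)} - \bar{v}^{(1)} \|_{L^\infty(1-\omega,1)}$ and $\| v^{(2)} - \bar{v}^{(2)} \|_{L^\infty(0,\omega)}$ are constrained, then apply \cite[Theorem 3.1]{BDP14} to obtain the Hamiltonian minimality on $(0,1-\omega)$ and $(\omega,1)$ respectively, and transport the result back via the usual bijection. Your added discussion of needle variations and of the interval correspondence $s \in (0,1-\omega) \mapsto t \in (0,\bar{\tau}(1-\omega))$, $s \in (\omega,1) \mapsto t \in (\bar{\tau}+\omega(T-\bar{\tau}),T)$ is accurate and consistent with the paper's (more terse) argument.
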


\begin{proof}
Let $\omega > 0$. Let $\varepsilon>0$ be such that \eqref{eq:mixed_optim} holds true. Then, the pair $(\bar{y},\bar{v})$ is a Pontryagin minimum of the following localized problem:
\begin{equation} \label{TC4_loc2}
\inf_{(y,v) \in \mathcal{T}}  \psi(y(1)) \quad \mathrm{s.t. } \;
\begin{cases}
\begin{array}{l}
G(y(0),y(1)) \in C, \\
\| v^{(1)} - \bar{v}^{(1)} \|_{L^\infty(1-\omega,1;\R^{l})} \leq \varepsilon, \\
\| v^{(2)} - \bar{v}^{(2)} \|_{L^\infty(0,\omega;\R^{l})} \leq \varepsilon.
\end{array}
\end{cases}
\end{equation}
Applying \cite[Theorem 3.1]{BDP14}, we obtain the existence of $(\alpha,\beta,\mu) \in \Lambda_L(\bar{y},\bar{v})$ such that
\begin{align*}
H(\bar{y}^{(1)}(s),p^{(1)}(s),\bar{v}^{(1)}(s)) \leq H(\bar{y}^{(1)}(s),p^{(1)}(s),v), \quad & \forall v \in U, \quad \text{for a.e.\@ $s \in (0,1-\omega)$}, \\
H(\bar{y}^{(2)}(s),p^{(2)}(s),\bar{u}^{(2)}(s)) \leq H(\bar{y}^{(2)}(s),p^{(2)}(s),v), \quad & \forall v \in U, \quad \text{for a.e.\@ $s \in (\omega,1)$}.
\end{align*}
The non-emptiness of $\Lambda_{P,\omega}(\bar{u},\bar{\tau})$ follows then with the usual transformation.
\end{proof}

Let us equip $\Lambda_L(\bar{u},\bar{\tau})$ with the following norm:
\begin{equation*}
\| (\alpha,\gamma,\nu) \|:= \alpha + |\beta_4| + \| \nu \|_{L^1(0,T;\R^l)}.
\end{equation*}
The following lemma is a technical lemma which will be useful for the announced passage to the limit.

\begin{lem} \label{lemma:lim_pontry}
Let $(\omega_k)_{k \in \mathbb{N}}$ be a sequence of positive numbers converging to 0. Let $(\alpha^k,\gamma^k,\nu^k)_{k \in \mathbb{N}}$ be a sequence in $\Lambda_L(\bar{u},\bar{\tau})$ such that
\begin{equation*}
\| (\alpha^k,\gamma^k,\nu^k) \|=1 \quad \text{and} \quad
(\alpha^k,\gamma^k,\nu^k) \in \Lambda_{P,\omega_k}(\bar{u},\bar{\tau}), \quad \forall k \in \mathbb{N}.
\end{equation*}
Then, there exists at least one non-zero weak-$*$ limit point that belongs to $\Lambda_{P}(\bar{u},\bar{\tau})$.
\end{lem}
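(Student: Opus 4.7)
I would proceed by weak-$*$ compactness, extracting a limit point of $(\alpha^k,\gamma^k,\nu^k)$ and verifying that the limit satisfies all properties defining $\Lambda_P(\bar u,\bar\tau)$, with non-triviality being the delicate point.

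The first step is to upgrade the $L^1$ bound to a uniform $L^\infty$ bound on $(\nu^k)$. The costate $p^k$ is the unique solution of the piecewise linear adjoint equation \eqref{adjoint-TC2} with data $p^k(T)=\alpha^k\nabla\phi(\bar x(T))$ and jump $p^k(\bar\tau^+)-p^k(\bar\tau^-)=\gamma^k\nabla g(\bar x(\bar\tau))$; thus $p^k$ depends linearly and continuously on $(\alpha^k,\gamma^k)$, and since $\alpha^k+|\gamma^k|\leq 1$, the costates $p^k$ are uniformly bounded on $[0,T]\setminus\{\bar\tau\}$. The stationarity condition \eqref{PMP-TC2} rewrites as $\nabla c(\bar u(t))^\top\nu^k(t)=-\nabla_u H(\bar x(t),p^k(t),\bar u(t))$, and together with complementarity and the linear independence assumption (H1) (which, as noted in the remark following (H1), yields an $L^\infty$-bound on the Lagrange multipliers of the control constraints), this determines $\nu^k(t)$ as a bounded linear function of $p^k(t)$. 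Hence $\|\nu^k\|_{L^\infty(0,T;\R^l)}\leq C$ uniformly in $k$.

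Next, by compactness in $\R$ and Banach--Alaoglu applied in $L^\infty(0,T;\R^l)$, I extract a subsequence (not relabelled) along which $\alpha^k\to\alpha$, $\gamma^k\to\gamma$, and $\nu^k\rightharpoonup^*\nu$ weakly-$*$. The associated costates $p^k$ converge uniformly on $[0,\bar\tau)$ and on $(\bar\tau,T]$ to the costate $p$ associated with $(\alpha,\gamma,\nu)$, by continuous dependence on the data of the linear adjoint equation. All conditions in Definition \ref{defi:lag_mult_tc} are affine in the triplet and in $p$, so they pass to the weak-$*$ limit (for stationarity, integrating against an arbitrary test function in $L^1$); hence $(\alpha,\gamma,\nu)\in\Lambda_L(\bar u,\bar\tau)$. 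To extend Pontryagin's principle from the $k$-dependent ``good intervals'' to the whole of $(0,T)$, let $N_k$ be the null set outside of which Pontryagin's principle holds for $(\alpha^k,\gamma^k,\nu^k)$ on $(0,\bar\tau(1-\omega_k))\cup(\bar\tau+\omega_k(T-\bar\tau),T)$ and set $N=\{\bar\tau\}\cup\bigcup_k N_k$, still a null set. For any $t\in(0,T)\setminus N$, the point $t$ belongs to the good interval of $(\alpha^k,\gamma^k,\nu^k)$ for $k$ large enough, so $H(\bar x(t),p^k(t),\bar u(t))\leq H(\bar x(t),p^k(t),u)$ for every $u\in U$; uniform convergence of $p^k(t)$ and continuity of $H$ in $p$ yield Pontryagin's principle at $t$ in the limit.

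The main obstacle is non-triviality. Here the specific norm $\|(\alpha,\gamma,\nu)\|=\alpha+|\gamma|+\|\nu\|_{L^1}$ is tailor-made: because the constant function $1$ belongs to $L^1(0,T)$ on the bounded interval $(0,T)$, testing $\nu^k\rightharpoonup^*\nu$ componentwise against $1$ and using $\nu^k\geq 0$ gives
\[
\|\nu^k\|_{L^1(0,T;\R^l)}=\sum_{i=1}^l\int_0^T\nu^k_i(t)\ \dd t\;\longrightarrow\;\sum_{i=1}^l\int_0^T\nu_i(t)\ \dd t=\|\nu\|_{L^1(0,T;\R^l)}.
\]
Combined with $\alpha^k\to\alpha$ and $\gamma^k\to\gamma$, this gives $\|(\alpha,\gamma,\nu)\|=\lim_k\|(\alpha^k,\gamma^k,\nu^k)\|=1$, so the limit is non-zero and hence lies in $\Lambda_P(\bar u,\bar\tau)$.
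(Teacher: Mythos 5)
Your proposal is correct and follows essentially the same route as the paper's proof: a uniform $L^\infty$ bound on $(\nu^k)$, weak-$*$ extraction, uniform convergence of the costates piecewise on $[0,\bar{\tau})$ and $(\bar{\tau},T]$, passage to the limit in the multiplier conditions, recovery of Pontryagin's inequality outside a countable union of null sets, and the observation that $\nu^k \geq 0$ turns the $L^1$-norm into a weak-$*$ continuous functional so that the unit norm (hence non-triviality) is preserved. The only cosmetic difference is that you derive the $L^\infty$ bound on $\nu^k$ directly from the stationarity condition and the qualification \eqref{LIG}, whereas the paper invokes the inward pointing condition via a cited result; both are legitimate.
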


\begin{proof}
The proof is in line with \cite[Lemma 3.5]{BDP14}.
As a consequence of the inward pointing condition, we have that the sequence $\nu_k$ is also bounded for the $L^\infty$ norm (see \cite[Theorem 3.1]{BO10}). The existence of a weak-$*$ limit point $(\alpha,\gamma,\nu)$ follows. Without loss of generality, we now assume that the whole sequence converges to $(\alpha,\gamma,\nu)$ for the weak-$*$ topology.
Let us first check that $(\alpha,\gamma,\nu)$ is non-zero. We denote by $\mathbf{1}$ the vector of dimension $l$ with coordinates equal to 1. Since $\nu_k \geq 0$, we have $\| \nu_k \|_{L^1(0,T;\R^l)} = \int_0^T \mathbf{1} \cdot \nu_k(t) \, \text{d} t$. Thus we can pass to the limit in the $L^1$-norm, which guarantees that
\begin{equation*}
\| (\alpha,\gamma,\nu) \|= \lim_{k \to \infty} \| (\alpha^k,\gamma^k,\nu^k) \| = 1.
\end{equation*}
Let us prove now that $(\alpha,\gamma,\nu) \in \Lambda_L(\bar{u},\bar{\tau})$.
The reader can easily verify that $\alpha \geq 0$ and that \eqref{sign_comp_TC2} holds true.
We denote by $p^k$ the unique costate associated with $(\alpha^k,\gamma^k,\nu^k)$, for all $k \in \mathbb{N}$. We denote by $p$ the unique costate associated with $(\alpha,\gamma,\nu)$. Let us prove that $(p^k)_{k \in \mathbb{N}}$ converges uniformly to $p$. The costates $p^k$ are all solutions to the same differential equation on $(\bar{\tau},T]$, with terminal condition $\alpha_k \nabla \phi(\bar{x}(T))$. Since $\alpha_k \nabla \phi(\bar{x}(T)) \rightarrow \alpha \nabla \phi(\bar{x}(T))$, we deduce that $(p^k_{|(\bar{\tau},T]})$ converges uniformly to $p_{|(\bar{\tau},T]}$. Then, since $\gamma^k \rightarrow \gamma$, we obtain that $p^k(\bar{\tau}^-) \rightarrow p(\bar{\tau}^-)$ and with the same argument as before, we obtain that $(p^k)$ converges uniformly to $p$ on $[0,\bar{\tau})$. Now, we observe that the sequence $\nabla H^a(\bar{x}(\cdot),p^k(\cdot),\bar{u}(\cdot),\nu^k(\cdot))$ converges to $\nabla H^a(\bar{x}(\cdot),p(\cdot),\bar{u}(\cdot),\nu(\cdot))$ for the weak-$*$ star convergence, since $H^a$ is linear in $\nu$ and since $c(\bar{u}(\cdot))$ lies in $L^\infty(0,T;\R^l)$. Thus $\nabla H^a(\bar{x}(\cdot),p(\cdot),\bar{u}(\cdot),\nu(\cdot))$ is null. With the uniform convergence of $(p^k)_{k \in \mathbb{N}}$, we directly obtain that \eqref{noc_tau_TC2} holds true. It follows that $(\alpha,\gamma,\nu) \in \Lambda_L(\bar{u},\bar{\tau})$.

To conclude the proof, it remains to prove that Pontryagin's Principle is satisfied. Let $\tau_1 \in (0,\bar{\tau})$ and $\tau_2 \in (\bar{\tau},T)$ be arbitrary. It suffices to show that
\begin{equation} \label{ineq_pont_0}
\begin{array}{ll}
H(\bar{x}(t),p(t),u(t)) \leq H(\bar{x}(t),p(t),u), \quad & \forall u \in U, \quad \text{for a.e. $t \in (0,\tau_1)$}, \\
H(\bar{x}(t),p(t),u(t)) \leq H(\bar{x}(t),p(t),u), \quad & \forall u \in U, \quad \text{for a.e. $t \in (\tau_2,T)$}.
\end{array}
\end{equation}
Let $\bar{k}$ be sufficiently large, so that $(0,\tau_1) \subseteq (0,\bar \tau(1-\omega_k))$, for all $k \geq \bar{k}$. For all $k \geq \bar{k}$, we denote by $I_k$ the set of times $t \in (0,\tau_1)$ such that
\begin{equation} \label{ineq_pont}
H(\bar{x}(t),p^k(t),u(t)) \leq H(\bar{x}(t),p^k(t),u), \quad \forall u \in U.
\end{equation}
Since $(\alpha^k,\gamma^k,\nu^k) \in \Lambda_{P,\omega_k}(\bar{u},\bar{\tau})$, we have that $(0,\tau_1) \backslash I_k$ is of zero measure.
Let $I= \bigcap_{k \geq \bar{k}} I_k$. We have $(0,\tau_1) \backslash I = \bigcup_{k \geq \bar{k}} \big( (0,\tau_1) \backslash I_k \big)$, thus $(0,\tau_1) \backslash I$ is a set of zero measure.
Passing to the limit w.r.t.\@ $k$ in inequality \eqref{ineq_pont}, for all $t \in I$, we obtain the first inequality in \eqref{ineq_pont_0}. The second inequality is proved similarly.
This concludes the proof. 
\end{proof}

\begin{proof}[Proof of Theorem \ref{theo_noc_strong}]
Let $\omega > 0$. Considering the localized problem introduced in \eqref{TC4_loc2} in the proof of Lemma \ref{lemm:lambda_p_omega}, we obtain that the second-order optimality conditions derived in Lemma \ref{lemma:cn2_tc} are still valid if $\Lambda_L(\bar{u},\bar{\tau})$ is replaced by $\Lambda_{P,\omega}(\bar{u},\bar{\tau})$ in inequality \eqref{eq:cn2_tc}. Now, let $(\delta u,\delta \tau) \in C(\bar{u},\bar{\tau})$ and let $(\omega_k)_{k \in \mathbb{N}}$ be a sequence of positive numbers converging to 0. As we have just proved, 
for all $k \in \mathbb{N}$, there exists  a multiplier $(\alpha^k,\gamma^k,\nu^k) \in \Lambda_{P,\omega_k}(\bar{u},\bar{\tau})$ such that $\Omega[\alpha^k,\gamma^k,\nu^k](\delta u,\delta \tau) \geq 0$. Re-normalizing if necessary the sequence $(\alpha^k,\gamma^k,\nu^k)_{k \in \mathbb{N}}$, we can assume that $\| \alpha^k,\gamma^k,\nu^k \|=1$. Applying then Lemma \ref{lemma:lim_pontry}, we obtain the existence of a weak-$*$ limit point $(\alpha,\gamma,\nu) \in \Lambda_P(\bar{u},\bar{\tau})$. Finally, the quadratic form $\Omega$ is weakly-$*$ continuous with respect to $(\alpha,\gamma,\nu)$ because $H^a$ is linear w.r.t.~the multiplier $\nu$ and the costates $p^k$ associated with $(\alpha^k,\gamma^k,\nu^k)$ uniformly converge to the costate $p$ associated with $(\alpha,\gamma,\nu)$. 
Thus we have
\begin{equation*}
\Omega[\alpha,\gamma,\nu](\delta u,\delta \tau)
= \lim_{k \to \infty} \Omega[\alpha^k,\gamma^k,\nu^k](\delta u,\delta \tau)
\geq 0,
\end{equation*}
which concludes the proof of the theorem.
\end{proof}

\begin{rem}
It is possible to provide second-order sufficient optimality conditions for problem $(\mathrm{P})$ (see {\rm{\cite{BDP14b}}}). These conditions would consist in a natural strengthening of the necessary optimality conditions and would ensure that $(\bar{y},\bar{v})$ is a Pontryagin minimum. However, they would not guarantee the local optimality of $\bar{u}$ (for problem \eqref{TC1}) for any standard topology.
\end{rem}

\subsection{Non-singularity of the Pontryagin multipliers}

\begin{lem} \label{lemma:constancy}
Let $(\alpha,\gamma,\nu) \in \Lambda_P(\bar{u},\bar{\tau})$. Let $p$ be the associated costate.
Then, there exist $H_1$ and $H_2 \in \R$ such that
\begin{equation} \label{eq:constancy}
H(\bar{x}(t),p(t),\bar{u}(t))= H_1, \quad \text{for a.e.\@ $t \in (0,\bar{\tau})$}, \quad
H(\bar{x}(t),p(t),\bar{u}(t))= H_2, \quad \text{for a.e.\@ $t \in (\bar{\tau},T)$}.
\end{equation}
Moreover,
\be{\label{saut-Hamiltonien-TC4}}
H_2-H_1=-\alpha.
\ee
\end{lem}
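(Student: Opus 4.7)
The plan has two steps. First, I would show that $t\mapsto H(\bar x(t),p(t),\bar u(t))$ is constant on each of the open intervals $(0,\bar\tau)$ and $(\bar\tau,T)$ separately, producing constants $H_1$ and $H_2$; then I would deduce the jump formula $H_2-H_1=-\alpha$ by substituting these constants into the identity \eqref{noc_tau_TC2} already available in the definition of a Pontryagin multiplier.

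For the first step I would invoke the classical constancy-of-Hamiltonian result for autonomous control problems. On $(0,\bar\tau)$, the data are time-invariant: $\bar x$ and $p$ satisfy the Hamiltonian system $\dot{\bar x}=\nabla_p H(\bar x,p,\bar u)$ and $\dot p=-\nabla_x H(\bar x,p,\bar u)$ given by \eqref{sys} and \eqref{adjoint-TC2}, and since $(\alpha,\gamma,\nu)\in\Lambda_P(\bar u,\bar\tau)$ the minimum condition $H(\bar x(t),p(t),\bar u(t))\leq H(\bar x(t),p(t),u)$ holds for every $u\in U$ and a.e.\@ $t\in(0,\bar\tau)$. The standard constancy theorem (see, e.g., \cite{clarke2013,vinter}) then yields a constant $H_1$ with $H(\bar x(t),p(t),\bar u(t))=H_1$ a.e.\@ on $(0,\bar\tau)$; an identical argument applied on $(\bar\tau,T)$ produces $H_2$. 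The jump of $p$ given in \eqref{formule-saut0} plays no role here, because the two subintervals are treated independently.

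Once these constants are in hand, the second step is just a computation starting from \eqref{noc_tau_TC2}:
\begin{equation*}
\alpha = \int_0^T \rho_{\bar\tau}(t)\,H(\bar x(t),p(t),\bar u(t))\,\dd t = \frac{1}{\bar\tau}\int_0^{\bar\tau}H_1\,\dd t - \frac{1}{T-\bar\tau}\int_{\bar\tau}^T H_2\,\dd t = H_1-H_2,
\end{equation*}
which is exactly $H_2-H_1=-\alpha$.

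The main obstacle is the first step: since $\bar u$ is only measurable, the chain rule cannot be applied directly to $H(\bar x(t),p(t),\bar u(t))$, and one must use the measurable-control version of the autonomous constancy theorem (which is proved by combining the adjoint equation with the minimum principle at Lebesgue points of $\bar u$ and an absolute-continuity argument). If one preferred a self-contained route, an equivalent argument is to work inside the transformed problem \eqref{TC4}: on each of the two blocks $(y^{(i)},p^{(i)},\bar v^{(i)})$, the dynamics in $s$ is autonomous with parameter $\bar\xi=\bar\tau$, the minimum principle transferred from $\Lambda_P(\bar u,\bar\tau)$ separates into two independent minimizations, and the usual constancy argument in the $s$-variable gives $H_1$ and $H_2$ after transport through $\pi_{\bar\tau}$; in that presentation the jump relation then falls out of \eqref{noc_tau_TC4} rather than \eqref{noc_tau_TC2}.
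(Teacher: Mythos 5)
Your proposal is correct and follows essentially the same route as the paper: the constancy of $H$ on each of the intervals $(0,\bar{\tau})$ and $(\bar{\tau},T)$ is obtained from the classical constancy-of-the-Hamiltonian theorem for autonomous problems (the paper cites \cite{fatorini} where you cite \cite{clarke2013,vinter}), and the jump relation \eqref{saut-Hamiltonien-TC4} is then read off directly from \eqref{noc_tau_TC2} exactly as in your computation.
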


\begin{proof}
The proof of \eqref{eq:constancy} follows from the classical proof of the constancy of the Hamiltonian along extremal trajectories (solutions of a Hamiltonian system issued from Pontryagin's Principle) for autonomous problems, see, {\it{e.g.}}, \cite{fatorini}. 
Equality \eqref{saut-Hamiltonien-TC4} follows directly from \eqref{noc_tau_TC2}. 
\end{proof}

We are now able to prove that Pontryagin multipliers are not singular and unique, up to a multiplicative constant.

\begin{prop} \label{prop-anormal}
For all $(\alpha,\gamma,\nu) \in \Lambda_P(\bar{u},\bar{\tau})$, we have $\alpha>0$. Moreover, there exists a unique Pontryagin multiplier such that $\alpha=1$.
\end{prop}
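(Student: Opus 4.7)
The plan is to prove non-singularity ($\alpha>0$) first, via a contradiction argument that exploits the fact that the Hamiltonian $H(x,p,u)=p\cdot f(x,u)$ is linear-homogeneous in $p$, and then deduce uniqueness by applying the same linear structure to the difference of two multipliers normalized by $\alpha=1$. The transversality condition in (H3) is the key ingredient that transfers information across the jump time $\bar\tau$.

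Assume for contradiction that $(\alpha,\gamma,\nu)\in\Lambda_P(\bar u,\bar\tau)$ with $\alpha=0$, and let $p$ be the associated costate. Since the adjoint equation \eqref{adjoint-TC2} is linear-homogeneous in $p$ and the terminal condition is $p(T)=\alpha\nabla\phi(\bar x(T))=0$, uniqueness of the Cauchy problem on $(\bar\tau,T)$ yields $p\equiv 0$ there. By Lemma \ref{lemma:constancy}, $H_2=p\cdot f\equiv 0$ on $(\bar\tau,T)$ and $H_2-H_1=-\alpha=0$, so $H_1=0$ on $(0,\bar\tau)$. The jump condition \eqref{formule-saut0} with $p(\bar\tau^+)=0$ gives $p(\bar\tau^-)=-\gamma\nabla g(\bar x(\bar\tau))$, and evaluating $H_1=0$ at $t=\bar\tau^-$ provides
\begin{equation*}
0=p(\bar\tau^-)\cdot f(\bar x(\bar\tau),\bar u(\bar\tau^-))=-\gamma\,\nabla g(\bar x(\bar\tau))\cdot f(\bar x(\bar\tau),\bar u(\bar\tau^-)).
\end{equation*}
The transversality condition \eqref{cond-transverse} of (H3) forces $\gamma=0$, hence $p(\bar\tau^-)=0$ and, by another application of uniqueness to the adjoint on $[0,\bar\tau)$, $p\equiv 0$ on $[0,T]$. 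The stationarity condition \eqref{PMP-TC2} then reduces to $\sum_{i=1}^l\nu_i(t)\nabla c_i(\bar u(t))=0$ a.e.; by the complementarity relation \eqref{sign_comp_TC2} the sum is supported on the active set, and the linear independence assumption (H1) yields $\nu\equiv 0$. This contradicts the non-triviality of $(\alpha,\gamma,\nu)$, so $\alpha>0$.

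For uniqueness, existence of a Pontryagin multiplier with $\alpha=1$ follows from Theorem \ref{theo_noc_strong} by dividing any multiplier by its positive scalar $\alpha$. Suppose $(1,\gamma_i,\nu_i)$, $i=1,2$, are two such multipliers with costates $p_i$. Then $(0,\gamma_1-\gamma_2,\nu_1-\nu_2)$ with costate $p_1-p_2$ satisfies the homogeneous versions of \eqref{adjoint-TC2}, \eqref{formule-saut0} and \eqref{PMP-TC2}; moreover $H(\bar x,p_1-p_2,\bar u)$ is piecewise constant by linearity of $H$ in $p$, with zero jump since the $\alpha$-difference is $0$. Repeating verbatim the cascade of the previous paragraph with $(p_1-p_2,\gamma_1-\gamma_2,\nu_1-\nu_2)$ in place of $(p,\gamma,\nu)$ produces $p_1\equiv p_2$ on $[0,T]$, then $\gamma_1=\gamma_2$, and finally $\nu_1\equiv\nu_2$ from linear independence applied to the active part of $\nu_1-\nu_2$ (the sign constraints on the difference are not needed, since by complementarity each $\nu_i$ is individually supported on the active set).

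The main obstacle is the propagation across $\bar\tau$: without assumption (H3) a nonzero $\gamma$ could be absorbed into the jump of $p$ while keeping $H_1=H_2=0$, breaking the chain that forces $p\equiv 0$ on $[0,\bar\tau)$. The transverse-crossing hypothesis is thus used in an essential way to rule out such parasitic solutions of the adjoint system and to secure both the non-singularity and the one-dimensionality of $\Lambda_P(\bar u,\bar\tau)$.
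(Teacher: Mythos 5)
Your proof is correct and follows essentially the same route as the paper's: backward uniqueness of the adjoint from $p(T)=\alpha\nabla\phi(\bar x(T))=0$, the constancy of the Hamiltonian from Lemma \ref{lemma:constancy} together with the jump relation \eqref{saut-Hamiltonien-TC4} to force $\gamma=0$ via transversality, and the qualification condition from (H1) to kill $\nu$; the uniqueness part by subtracting two normalized multipliers is just a linear-algebra repackaging of the paper's direct chain of equalities. No gaps.
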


\begin{proof}
Let $(\alpha,\gamma,\nu) \in \Lambda_P(\bar{u},\bar{\tau})$ be such that $\alpha=0$. Let $p$ be the associated costate. Then, $p(T)=0$ and thus $p(t)=0$ for all $t \in (\bar{\tau},T]$. It follows from Lemma \ref{lemma:constancy} that $H_2=0$ and thus $H_1= H_2+ \alpha= 0$. We deduce then from the jump condition that
\begin{equation*}
\gamma \nabla g(\bar{x}(\bar{\tau})) \cdot f(\bar{x}(\bar{\tau}),\bar{u}(\bar{\tau}^-))
= (\underbrace{p(\bar{\tau}^+)}_{=0}-p(\bar{\tau}^-)) \cdot f(\bar{x}(\bar{\tau}),\bar{u}(\bar{\tau}^-))
= - H_1 = 0.
\end{equation*}
Since $\bar{\tau}$ is transverse, the scalar product $\nabla g(\bar{x}(\bar{\tau})) \cdot f(\bar{x}(\bar{\tau}),\bar{u}(\bar{\tau}^-))$ is non-zero and therefore $\gamma=0$. Using again the jump condition, we deduce then that $p(\bar{\tau}^-)= p(\bar{\tau}^+)=0$ and thus that $p=0$ on $[0,\bar{\tau})$. It further follows from the stationarity of the augmented Hamiltonian that
\begin{equation*}
\nabla c(\bar{u}(t)) \nu(t)=0.
\end{equation*}
Let us set $\Delta_{c,i}:= \{ t \in (0,T) \,| \, c_i(\bar{u}(t))= 0 \}$. By the complementarity condition, we have that
\begin{equation*}
\nu_i(t)= 0, \quad \forall i=1,...,\ell, \quad \text{for a.e.\@ $t \in (0,T) \backslash \Delta_{c,i}$}.
\end{equation*}
By the surjectivity condition \eqref{qualif2} (deduced from Assumption (H1)), we have that there exists $v \in L^2(0,T;\R^m)$ such that
\begin{equation*}
Dc_i(\bar{u}(t))v(t)= \nu_i(t), \quad \forall i=1,...,l, \quad \text{for a.e.\@ $t \in \Delta_{c,i}$}.
\end{equation*}
Therefore,
\begin{equation} \label{surj_app}
0=
\sum_{i=1}^l
\int_{\Delta_{c,i}}
\nabla c_i(\bar{u}(t))\nu_i(t) \cdot v(t)  \ \dd t
= \sum_{i=1}^l
\int_{\Delta_{c,i}}
\nu_i(t)  Dc_i(\bar{u}(t)) v(t)  \ \dd t
= \sum_{i=1}^l \int_{\Delta_{c,i}} | \nu_i (t) |^2  \ \dd t.
\end{equation}
We conclude that $\nu_i(t)=0$ for all $i=1,...,l$ and for a.e.\@ $t \in \Delta_{c,i}$ and thus that $\nu=0$. We obtain a contradiction with the non-nullity of $(\alpha,\gamma,\nu)$. We can conclude that $\alpha>0$. 

The existence of a Pontryagin multiplier with $\alpha=1$ follows. Consider now two Pontryagin multipliers $(\alpha,\gamma,\nu)$ and $(\alpha',\gamma',\nu')$ with $\alpha= \alpha'=1$. Denote by $p$ and $p'$ the associated costates. Denote by $H_1$, $H_2$, $H_1'$ and $H_2'$ the constant values of the corresponding Hamiltonians on $(0,\bar{\tau})$ and $(\bar{\tau},T)$. We first observe that $p$ and $p'$ are equal on $(\bar{\tau},T]$. Thus $H_2= H_2'$ and since $\alpha= \alpha'$, we also have that $H_1= H_1'$. Then, using the jump condition at $\bar{\tau}$ and the equality $p(\bar{\tau}^+)=p'(\bar{\tau}^+)$, we obtain that
\begin{align*}
\gamma \nabla g(\bar{x}(\bar{\tau})) \cdot f(\bar{x}(\bar{\tau}),\bar{u}(\bar{\tau}^-))
= \ & (p(\bar{\tau}^+)-p(\bar{\tau}^-)) \cdot f(\bar{x}(\bar{\tau}),\bar{u}(\bar{\tau}^-)) \\
= \ & p'(\bar{\tau}^+) \cdot f(\bar{x}(\bar{\tau}),\bar{u}(\bar{\tau}^-)) - H_1 \\
= \ & p'(\bar{\tau}^+) \cdot f(\bar{x}(\bar{\tau}),\bar{u}(\bar{\tau}^-)) - H_1' \\
= \ & (p'(\bar{\tau}^+)-p'(\bar{\tau}^-)) \cdot f(\bar{x}(\bar{\tau}),\bar{u}(\bar{\tau}^-)) \\
= \ & \gamma' \nabla g(\bar{x}(\bar{\tau})) \cdot f(\bar{x}(\bar{\tau}),\bar{u}(\bar{\tau}^-)).
\end{align*}
We conclude that $\gamma= \gamma'$, by the transversality of $\bar{\tau}$. It follows that $p=p'$ on $[0,\bar{\tau}^-)$ and thus,
\begin{equation*}
\nabla c(\bar{u}(t)) \nu(t)= \nabla c(\bar{u}(t)) \nu'(t), \quad \text{for a.e.\@ $t \in (0,T)$}.
\end{equation*}
Using that the mapping \eqref{qualif2} is surjective and proceeding as in \eqref{surj_app}, we obtain that $\nu= \nu'$, which concludes the proof of uniqueness.
\end{proof}

\begin{rem} \label{true_constancy}
The Hamiltonian $H$ does not contain the indicator function of the set $K^c$. Let us define $H_0(x,p,u)= H(x,p,u) + \mathds{1}_{K^c}(x)$. The mapping $H_0$ can be seen as the ``true" Hamiltonian associated with the time crisis problem. We deduce from Lemma \ref{lemma:constancy} that for the unique Pontryagin multiplier with $\alpha=1$, we have that $t\mapsto H_0(\bar{x}(t),p(t),\bar{u}(t))$ is constant almost everywhere over the whole interval $(0,T)$.
\end{rem}

\begin{rem}
The proof of Proposition \ref{prop-anormal} uses in an essential manner the property of constancy of the Hamiltonian obtained in Lemma \ref{lemma:constancy}. Therefore, Proposition \ref{prop-anormal} cannot be extended in a direct way to Lagrange multipliers.
\end{rem}

\section{Necessary optimality conditions: case of several crossing points} \label{NOC_sec_gen}

We extend in this section the obtained results to the situation with several crossing points, without detailing proofs. We therefore assume that (H3) is satisfied with crossing points $\bar{\tau}_1 < ... < \bar{\tau}_r$. We denote by $\bar{\tau} \in (0,T)^r$ the vector $(\bar{\tau}_1,...,\bar{\tau}_r)$ and make use of the conventions $\bar{\tau}_0=0$ and $\bar{\tau}_{r+1}= T$.

We first need to define a new change of variables. Given $\tau \in (0,T)^r$, we define the mapping $\pi_\tau \colon s \in [0,r+1] \rightarrow [0,T]$ as follows:
\be{\label{time-change1_gen}}
\pi_\tau(s):=
\tau_j + (s-j)(\tau_{j+1}-\tau_j), \quad \forall j=0,...,r, \quad \forall s \in [j,j+1].
\ee
Given a control $\tilde{u} \in L^\infty(0,r+1;U)$ and $\tau \in (0,T)^r$, there is a unique solution $\tilde x_{\tilde u,\tau}$ of the Cauchy problem
\be{\label{cauchy-pbm3_gen}}
\left\{
\begin{array}{rl}
\ds \frac{d\tilde x}{ds}(s)&=\ds \frac{d\pi_{\tau}}{ds}(s) f(\tilde x(s),\tilde u(s)) \quad \mathrm{for\ a.e.} \; s\in [0,r+1],\vspace{0.15cm}\\
\tilde x(0)&=x_0. 
\end{array}
\right.
\ee
The optimal control problem to be considered, after change of variable is now
\be{\label{TC3_gen}}
\inf_{ \tilde u \in  \mathcal{\tilde U}, \; \tau \in (0,T)^r} \phi(\tilde x_{\tilde u,\tau}(r+1))+ \sum_{j=1}^r (-1)^j \tau_j \quad \mathrm{s.t.} \; g(\tilde x_{\tilde u,\tau}(j))=0, \quad \forall j=1,...,r. 
\ee
For the generalization of the first- and second-order optimality conditions, we re-define the mapping $\rho_{{\tau}}$ as a mapping in $L^\infty(0,T;\R^{r})$ as follows:
\begin{equation*}
(\rho_{{\tau}}(t))_j
= \frac{1}{{{\tau}}_j- {{\tau}}_{j-1}} \quad \text{ if $t \in (j-1,j)$}, \quad
(\rho_{{\tau}}(t))_j
= \frac{-1}{{{\tau}}_{j+1}-{{\tau}}_j} \quad \text{ if $t \in (j,j+1)$}, \quad
(\rho_{{\tau}}(t))_j
= 0 \quad \text{otherwise}.
\end{equation*}

The following result extends the results obtained in Section \ref{NOC-sec}.

\begin{thm}
There exists a unique pair $(\gamma,\nu) \in \R^r \times L^\infty(0,T;\R^l)$ satisfying the following properties.
\begin{itemize}
\item The Lagrange multiplier $\nu$ satisfies \eqref{sign_comp_TC2}.
\item There exists a function $p \colon [0,T] \rightarrow \R^n$, whose restrictions to $[0,\bar{\tau}_1)$, $(\bar{\tau}_1,\bar{\tau}_2)$,...,$(\bar{\tau}_r,T]$ are absolutely continuous, which
satisfies the following adjoint equation
\be{\label{adjoint-TC2_gen}}
\begin{array}{ll}
\dot{p}(t)&= -\nabla_x H(\bar{x}(t),p(t),\bar{u}(t)) \quad \mathrm{a.e.} \; t\in [0,T],\\
p(T)&= \nabla \phi(\bar{x}(T)),
\end{array}
\ee
and the following jump conditions at the crossing times $\bar{\tau}_j$, $1 \leq j \leq r$:
\be{\label{formule-saut0_gen}}
p(\bar{\tau}_j^+)-p(\bar{\tau}_j^-) = \gamma_j \nabla g(\bar{x}(\bar{\tau}_j)).
\ee
\item The augmented Hamiltonian is stationary with respect to $v$, i.e.,\@ it satisfies \eqref{PMP-TC2}.
\item The following relation holds true
\be{\label{noc_tau_TC2_gen}}
\int_0^T (\rho_{\bar{\tau}}(t))_j H(\bar{x}(t),p(t),\bar{u}(t))  \ \dd t + (-1)^j= 0, \quad \forall j=1,...,r. 
\ee
\end{itemize}
Moreover, the mapping $t \in (0,T) \mapsto H_0(\bar{x}(t),p(t),\bar{u}(t))$ is constant (with $H_0$ defined as in Remark \ref{true_constancy}).
\end{thm}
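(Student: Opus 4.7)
The plan is to iterate, crossing by crossing, the reformulation and analysis of Sections \ref{transfo-sec}--\ref{NOC-sec}. First I would introduce an augmented dynamics on the fixed interval $[0,1]$ with $r+1$ state copies $y^{(0)},\ldots,y^{(r)}$ and a ``time'' variable $\xi=(\xi_1,\ldots,\xi_r)\in \R^r$ (each component constant in $s$), governed by $\frac{d}{ds}y^{(j)} = (\xi_{j+1}-\xi_j)\,f(y^{(j)},v^{(j)})$ with the conventions $\xi_0 := 0$, $\xi_{r+1}:=T$, subject to the mixed initial-final constraints $y^{(0)}(0)=x_0$, $y^{(j)}(0)=y^{(j-1)}(1)$ and $g(y^{(j-1)}(1))=0$ for $j=1,\ldots,r$, together with $\xi_j\in(0,T)$. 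The payoff becomes $\psi(y(1)) = \phi(y^{(r)}(1)) + \sum_{j=1}^r (-1)^j \xi_j$, reproducing the change of variable \eqref{time-change1_gen}. Extending Proposition \ref{equivTC1-TC4} to this setting is straightforward: the pair $(\bar y,\bar v)$ obtained with $\bar y^{(j)}(s) := \bar x(\pi_{\bar\tau}(s+j))$ is a weak minimum, the key point being that the transverse structure of $\bar x$ persists under small $L^\infty$-perturbations of $(\tilde u,\tau)$, shown by localizing the neighborhood argument of Proposition \ref{minGminF} around each $\bar\tau_j$.

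Next I would apply the first- and second-order conditions (Lemmas \ref{lem:oc_P} and \ref{lemma:cn2_p}), combined with the localization-plus-passage-to-the-limit construction of Theorem \ref{theo_noc_strong}, to obtain Pontryagin multipliers $(\alpha,\beta,\mu)$. The component $\beta$ now contains $r$ real numbers, one for each intermediate constraint $g(y^{(j-1)}(1))=0$, which after transformation become the jump coefficients $\gamma_j$ of the classical costate $p$ at the crossings $\bar\tau_j$. The transversality conditions and costate equation for the augmented problem translate line-by-line into \eqref{adjoint-TC2_gen}, \eqref{formule-saut0_gen}, and \eqref{PMP-TC2}; the stationarity of the augmented Hamiltonian with respect to each (unconstrained, since $\xi_j\in(0,T)$ is open) variable $\xi_j$ yields, via $\pi_{\bar\tau}$, the $r$ integral identities \eqref{noc_tau_TC2_gen} (up to a factor $\alpha$ on the right-hand side).

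The main obstacle is the generalization of Proposition \ref{prop-anormal}: non-singularity $\alpha>0$ and uniqueness of the normalized multiplier. The key ingredient is the piecewise constancy of $H$ on each $(\bar\tau_{j-1},\bar\tau_j)$ (Lemma \ref{lemma:constancy} applied subinterval by subinterval), with constant values denoted $H_j$. The identity \eqref{noc_tau_TC2_gen} then reduces to $H_j - H_{j+1} + (-1)^j \alpha = 0$, i.e.\@ $H_{j+1}-H_j = (-1)^j \alpha$. Assuming $\alpha=0$, the terminal condition $p(T)=0$ yields $p\equiv 0$ on $(\bar\tau_r,T]$, hence $H_{r+1}=0$, and the identity propagates $H_j=0$ for all $j$. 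The jump condition then gives $\gamma_r \nabla g(\bar x(\bar\tau_r))\cdot f(\bar x(\bar\tau_r),\bar u(\bar\tau_r^-)) = -H_r = 0$, and transversality (H3) forces $\gamma_r=0$, whence $p\equiv 0$ on $(\bar\tau_{r-1},\bar\tau_r)$. Iterating backwards through each crossing yields $p\equiv 0$ and $\gamma_j=0$ for all $j$, and the surjectivity \eqref{qualif2} combined with \eqref{PMP-TC2} kills $\nu$ via the argument leading to \eqref{surj_app}, contradicting the non-nullity of the multiplier. Uniqueness after normalizing $\alpha=1$ is obtained by applying the same backward propagation to the difference of two candidate multipliers.

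Finally, the constancy of $H_0 = H + \mathds{1}_{K^c}$ follows immediately from the piecewise constancy of $H$ together with the jump formula $H_{j+1}-H_j = (-1)^j$ (obtained from \eqref{noc_tau_TC2_gen} with $\alpha=1$): across $\bar\tau_j$, the indicator $\mathds{1}_{K^c}(\bar x(\cdot))$ jumps by $+1$ for odd $j$ (entry into $K^c$) and by $-1$ for even $j$ (return to $K$), that is, by $-(-1)^j$, which exactly cancels the jump of $H$. Hence $H_0$ has zero jump at every $\bar\tau_j$ and is globally constant a.e.\@ on $(0,T)$.
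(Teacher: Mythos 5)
Your proposal is correct and follows exactly the route the paper intends for this theorem (which it states ``without detailing proofs,'' asserting only that the extension follows the same lines as the single-crossing case): the $(r+1)$-copy augmentation with multipliers $\xi_j$, the identities $H_{j+1}-H_j=(-1)^j\alpha$ derived from \eqref{noc_tau_TC2_gen}, the backward propagation through the crossings for non-singularity and uniqueness, and the cancellation of the jumps of $H$ against those of $\mathds{1}_{K^c}(\bar{x}(\cdot))$ all check out. The only caveat, inherited from the paper itself, is that the piecewise constancy of $H$ underlying your backward-propagation argument requires the Pontryagin property of the multiplier, so the uniqueness you obtain is within the class of Pontryagin multipliers normalized by $\alpha=1$, consistent with Proposition \ref{prop-anormal} and the remark following it.
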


The extension of the second order optimality conditions follows the same lines. The linearized dynamics, for $\delta u \in L^\infty(0,T;\R^m)$ and $\delta \tau \in \R^r$ reads:
\begin{equation} \label{eq:lin_sys_x_gen}
\frac{d}{dt} \delta x(t)= 
Df[t](\delta x(t),\delta u(t))
+ (\rho_{\bar{\tau}}(t) \cdot \delta \tau) f[t], \quad \text{for a.e.\@ $t \in (0,T)$}, \quad
\delta x(0)= 0.
\end{equation}
The critical cone $C(\bar{u},\bar{\tau})$ is defined as follows:
\begin{equation}
C(\bar{u},\bar{\tau}):=
\left\{
\begin{array}{l}
(\delta u, \delta \tau) \in L^2(0,2;\R^{2l}) \times \R^r \, \big|\, \text{for the solution $\delta x$ to \eqref{eq:lin_sys_x}, } \\[0.5em]
\qquad D \phi(\bar{x}(T)) \delta x(T) + \sum_{j=1}^r (-1)^j \delta \tau_j \leq 0, \\[0.5em]
\qquad D g(\bar{x}(\bar{\tau}_j)) \delta x(\bar{\tau}_j) = 0, \; j=1,...,r,\\[0.5em]
\qquad c_i(\bar{u}(t))= 0 \Longrightarrow Dc_i(\bar{u}(t)) \delta u(t)= 0, \text{ $\forall i=1,...,l$, for a.e.\@ $t \in (0,T)$ }
\end{array}
\right\}.
\end{equation}
The quadratic form $\Omega[1,\gamma,\nu](\delta u,\delta \tau)$ is defined as
\begin{align}
\Omega[1,\gamma,\nu](\delta u,\delta \tau)
= \ & D^2 \phi(\bar{x}(T)) (\delta x(T))^2 + \sum_{j=1}^r \gamma_j D^2 g(\bar{x}(\tau_j)) (\delta x(\bar{\tau}_j))^2 \notag \\
& \qquad + \int_0^T D^2 H^a[t](\delta x(t),\delta u(t))^2  \ \dd t + 2 \int_0^T (\rho_{\bar{\tau}}(t)\cdot \delta \tau) DH[t](\delta x(t),\delta u(t))  \ \dd t.
\end{align}
We finally have the following result.

\begin{thm}
For all $(\delta u,\delta \tau) \in C(\bar{u},\bar{\tau})$, $\Omega[1,\gamma,\nu](\delta u,\delta \tau) \geq 0$.
\end{thm}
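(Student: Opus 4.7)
The plan is to extend the three-stage strategy developed in Section \ref{NOC-sec} (time change, state augmentation, lifting of optimality conditions) to the $r$-crossing setting, with the main new ingredient being the use of $r+1$ parallel copies of the state and control on the fixed interval $[0,1]$ instead of two. First, I would augment the system in \eqref{cauchy-pbm3_gen} as follows: introduce $y^{(1)},\ldots,y^{(r+1)} \in \R^n$, $v^{(1)},\ldots,v^{(r+1)} \in \R^m$, and $r$ scalar states $\xi_1,\ldots,\xi_r$ encoding the crossing times, with augmented dynamics
\begin{equation*}
\frac{dy^{(j)}}{ds}(s) = (\xi_j - \xi_{j-1}) f(y^{(j)}(s),v^{(j)}(s)), \qquad \frac{d\xi_j}{ds}(s) = 0, \qquad j = 1,\ldots,r+1,
\end{equation*}
under the conventions $\xi_0 = 0$, $\xi_{r+1} = T$, with mixed initial-final constraints $y^{(1)}(0)=x_0$, $y^{(j+1)}(0) = y^{(j)}(1)$, $g(y^{(j)}(1))=0$ for $j=1,\ldots,r$, and $\xi_j \in (0,T)$. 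The cost becomes $\phi(y^{(r+1)}(1)) + \sum_{j=1}^r (-1)^j \xi_j$, giving a classical Mayer problem with smooth data, denoted $(\mathrm{P}_r)$.

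Next, I would show that the natural lift $(\bar y, \bar v)$ of $(\bar u, \bar \tau)$ is a weak minimum of $(\mathrm{P}_r)$, by adapting Propositions \ref{minGminF} and \ref{equivTC1-TC4}. The transversality of each $\bar \tau_j$, combined with continuity of $\bar u$ at $\bar \tau_j$, allows one to localize each crossing independently: a small $L^\infty$-perturbation of $\bar v$ together with a small perturbation of $\bar \tau \in \R^r$ yields a trajectory of \eqref{cauchy-pbm1} with exactly $r$ transverse crossings, each one near the corresponding $\bar \tau_j$. The argument is a loop over $j \in \{1,\ldots,r\}$ of the estimate \eqref{tmp1-lemme-laurent}--\eqref{tmp1-lemme-laurent2}, exploiting the fact that $\bar{x}$ stays strictly inside $\mathrm{Int}(K)$ or $K^c$ between consecutive crossings.

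Once weak minimality is established, I would apply the second-order necessary conditions of \cite{BDP14} to $(\mathrm{P}_r)$, then unfold them exactly as in Section \ref{NOC1-sec} and Lemma \ref{lemma:cn2_tc}: the $r+1$ costates $p^{(j)}$ glue together into a single function $p$ on $[0,T]$ whose jumps at $\bar \tau_j$ are $\gamma_j \nabla g(\bar{x}(\bar \tau_j))$, the terminal transversality gives $p(T)=\alpha \nabla \phi(\bar{x}(T))$, and the $r$ constant-in-$s$ dual variables conjugate to the states $\xi_j$ yield, after division by $\alpha$, the $r$ scalar conditions \eqref{noc_tau_TC2_gen}. The Pontryagin upgrade is then performed as in Lemmas \ref{lemm:lambda_p_omega} and \ref{lemma:lim_pontry}, by localizing the perturbation in $L^\infty$-norm only on $r$ small windows around the $\bar \tau_j$. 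Non-singularity ($\alpha > 0$) and uniqueness of $(\gamma,\nu)$ follow from the same Hamiltonian-constancy argument as in Proposition \ref{prop-anormal}: if $\alpha = 0$ then $p$ vanishes on $(\bar \tau_r, T]$, and the jump condition at $\bar \tau_r$ together with the transversality of $\bar \tau_r$ forces $\gamma_r = 0$, which propagates backwards through $\bar \tau_{r-1},\ldots,\bar \tau_1$ to give $p \equiv 0$ and then $\nu \equiv 0$ via \eqref{qualif2}, a contradiction. Normalizing $\alpha = 1$ yields the desired inequality $\Omega[1,\gamma,\nu](\delta u, \delta \tau) \geq 0$ on $C(\bar u, \bar \tau)$.

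The main obstacle is purely bookkeeping: keeping track of the alternating signs $(-1)^j$ coming from the orientation of the crossings, checking that the rescaling factors $(\bar \xi_j - \bar \xi_{j-1})^{-1}$ recombine into the function $\rho_{\bar \tau}$ in the final formulas for $\Omega$ and for \eqref{noc_tau_TC2_gen}, and verifying that the backward cascade in the uniqueness argument really closes (which is exactly where the transversality of every crossing is used). Conceptually no new idea beyond Section \ref{NOC-sec} is required; the argument is a careful repetition with heavier notation.
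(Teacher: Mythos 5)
Your proposal is correct and follows exactly the route the paper intends: the authors state this extension ``without detailing proofs,'' asserting only that it follows the same lines as Section~\ref{NOC-sec}, and your $(r+1)$-copy augmentation with states $\xi_1,\dots,\xi_r$, the crossing-by-crossing localization of the weak-minimality argument, the unfolding of the multipliers, and the backward cascade through $\bar{\tau}_r,\dots,\bar{\tau}_1$ for non-singularity and uniqueness are precisely that argument carried out in detail. No discrepancy with the paper's approach to report.
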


\section{Conclusion}
The various transformations that we introduced in this paper enable us to obtain first- and second-order optimality conditions for the time crisis problem over a finite horizon, which presents the particularity to have a discontinuous cost function w.r.t.~the state. 
Since our approach relies in particular on a transverse hypothesis on optimal trajectories, 
further studies could investigate the case when 
optimal trajectories are no longer transverse. As well, we are interested in finding necessary optimality conditions in the case where $T=+\infty$ and 
$\theta(x_0)<+\infty$ (see a first attempt to tackle this case in \cite{bayen3}). Finally, the methodology developed in this paper could be used for numerical simulations of the time crisis problem. 

\end{document}